\newcommand{\supp}{\mathop{\rm supp}\nolimits}
\newcommand{\Diam}{\mathop{\rm Diam}\nolimits}
\newcommand{\id}{\mathop{\rm Id}\nolimits}
\newcommand{\Leb}{\mathop{\rm Leb}\nolimits}
\newcommand{\R}{\mathbb{R}}
\newcommand{\s}{\mathbb{S}}
\newcommand{\N}{\mathbb{N}}
\newcommand{\Z}{\mathbb{Z}}
\newcommand{\p}{\mathbb{P}}
\newcommand{\T}{\mathbb{T}}
\newcommand{\mMG}{\mathcal{M}_{G}}
\newcommand{\mMsp}{\mathcal{M}_{X}}
\newcommand{\mS}{\mathcal{S}}
\newcommand{\mA}{\mathcal{A}}
\newcommand{\mH}{\mathcal{H}}
\newcommand{\dd}{\, \mbox{d}}
\newcommand{\ball}{\mbox{B}}
\newcommand{\mgr}{\mu}
\newcommand{\msp}{\nu}
\newcommand{\eps}{\varepsilon}
\newcommand{\1}{\mathds{1}}
\newcommand{\dH}{{\rm dist}_{\mathcal{H}}}
\newcommand{\cle}{\preccurlyeq}
\newcommand{\dC}{d_{\rm C}}
\newcommand{\tmgr}{\tilde{\mgr}}
\newcommand{\tdelta}{\tilde{\delta}}
\newcommand{\m}{\lambda} 
\newtheorem{thm}{Theorem}
\newtheorem{lm}[thm]{Lemma}
\newtheorem{prop}[thm]{Proposition}
\theoremstyle{definition}
\newtheorem{defn}[thm]{Definition}
\newtheorem{ex}[thm]{Example}
\theoremstyle{remark}
\newtheorem{rem}[thm]{Remark}
\title[Non-stationary It\^o-Kawada and Ergodic Theorems]{Non-stationary It\^o-Kawada and Ergodic Theorems for random isometries}
\author{Grigorii Monakov}
\address{Grigorii Monakov, Department of Mathematics, University of California, Irvine, CA~92697, USA}
\email{gmonakov@uci.edu}
\begin{document}

\begin{abstract}
    We consider a nonstationary sequence of independent random isometries of a compact metrizable space. Assuming that there are no proper closed subsets with deterministic image we establish a weak-* convergence to the unique invariant under isometries measure, Ergodic Theorem and Large Deviation Type Estimate. We also show that all the results can be carried over to the case of a random walk on a compact metrizable group. In particular, we prove a nonstationary analog of classical It\^o-Kawada theorem and give a new alternative proof for the stationary case.
\end{abstract}

\maketitle

\section{Introduction}

Let $A$ and $B$ be two rotations of a two-dimensional sphere $\s^2$. Then it is not hard to see that for almost every (with respect to the uniform measure) pair of rotations and every starting point $x$ the trajectory $\{ x, Ax, Bx, A^2x, ABx, BAx, \ldots \}$ will be dense in $\s^2$. It was proven in \cite{AK} that if the trajectory is dense, then it equidistributes in the following sense: for any $n \in \N$ consider the set of $2^n$ points
\begin{equation*}
    P_n = \{A^n x, A^{n - 1} B x, \ldots, B^n x\}
\end{equation*}
and a region $\Delta \subset \s^2$ bounded by a piecewise smooth curve. Then the share of points of $P_n$ laying in $\Delta$ will tend to the normalized volume of $\Delta$:
\begin{equation*}
    \lim_{n \to \infty} \frac{\# (P_n \cap \Delta)}{2^n} = \frac{\Leb(\Delta)}{\Leb(\s^2)}.
\end{equation*}

We investigate a more general setting, where random independent isometries act on a compact metric space.

\subsection{Main result}

Let $(X, d)$ be a compact metric space. Denote the Borel $\sigma$-algebra on $X$ by $\mathcal{B}$ and the space of all Borel probability measures on $X$ by $\mMsp$.

Let $G$ be the group of isometries of $X$ and $\m$ be a Borel measure on $X$, such that any $g \in G$ preserves $\m$.\footnote{Such measure exists for any locally compact metric space, see \cite[Theorem 441H]{F}. For a sufficient condition for the uniqueness of such measure see Remark \ref{cor:uniq}} We equip $G$ with the usual $C^0 (X, X)$ topology, that can be defined by the following distance:
\begin{equation*}
    \dC(g, h) = \sup_{x \in X} d(g(x), h(x)) = \max_{x \in X} d(g(x), h(x)) = \max_{x \in X} d(g^{-1}(x), h^{-1}(x))
\end{equation*}
for $g, h \in G$. Notice that $G$ is compact with respect to $\dC$. We will denote by $\mathcal{B}_{G}$ the Borel $\sigma$-algebra on $G$.

Let $\mMG$ denote the space of all Borel probability measures on $G$. We will use the following
\begin{defn} \label{def:strictAper}
    We say that a Borel measure $\mgr \in \mMG$ satisfies \textbf{no deterministic images} condition if there does not exist a pair of proper closed subsets $A, B \subset X$, such that
    \begin{equation} \label{transCond}
        g(A) = B \quad \text{for any $g \in \supp(\mgr)$.}
    \end{equation} 
\end{defn}


The two main result of this paper are formulated below.

\begin{thm} \label{thm:mainWeakConv}
    Consider a compact (in weak-* topology) $K \subset \mMG$, such that any $\mgr \in K$ satisfies \textbf{no deterministic images} condition. Let $\mgr_1, \mgr_2, \ldots$ be a sequence of measures in $K$. Then for any starting measure $\msp \in \mMsp$ we have
    \begin{equation*}
        \lim_{n \to \infty} \mgr_{n} * \mgr_{n - 1} * \ldots * \mgr_{1} * \msp = \m
    \end{equation*}
    with respect to the weak-* topology on $\mMsp$.
\end{thm}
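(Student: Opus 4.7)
The plan is to reduce the theorem to showing that $\m$ is the unique weak-* accumulation point of $\nu_n := \mgr_n * \mgr_{n-1} * \cdots * \mgr_1 * \msp$, exploiting the compactness of $\mMsp$ (which holds since $X$ is compact) together with the compactness of $K$ (via diagonal extraction from sequences in $K$). Let $\Omega \subset \mMsp$ denote the non-empty weak-* closed set of accumulation points; it is enough to prove $\Omega = \{\m\}$.

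Given $\tmsp \in \Omega$ and indices $n_k \to \infty$ with $\nu_{n_k} \to \tmsp$, I would use the weak-* compactness of $K$ and a diagonal argument to refine the subsequence so that, for every $j \ge 1$, the measures $\mgr_{n_k - j + 1}$ converge in $\mMG$ to some $\hat{\mgr}_j \in K$, and so that $\nu_{n_k - m}$ converges to some $\tmsp_{-m} \in \Omega$ for every $m \ge 0$. Passing to the limit in the identity $\nu_{n_k} = \mgr_{n_k} * \mgr_{n_k - 1} * \cdots * \mgr_{n_k - m + 1} * \nu_{n_k - m}$ yields $\tmsp = \hat{\mgr}_1 * \cdots * \hat{\mgr}_m * \tmsp_{-m}$ for every $m \ge 1$, so that $\tmsp$ carries a rich infinite-past convolution representation with factors in $K$ (and by an analogous argument also a forward representation).

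Next, I would bring the NDI hypothesis to bear via an auxiliary group. Let $S \subset G$ be the closed sub-semigroup generated by $\bigcup_{\mgr \in K} \supp \mgr$; since $G$ is a compact topological group, every closed sub-semigroup of $G$ is a closed subgroup, so $S$ is a subgroup. For any $x_0 \in X$, the compact set $A := S x_0$ is preserved by every $g \in \supp \mgr$ for each $\mgr \in K$, because $g, g^{-1} \in S$ force $g(A) \subseteq A \subseteq g(A)$. Taking $A = B$ in Definition \ref{def:strictAper}, a proper $A$ would violate NDI for any $\mgr \in K$; hence $S$ acts transitively on $X$, and because $X$ is then a compact homogeneous $S$-space on which $S$ acts by isometries, $\m$ is the unique $S$-invariant probability measure on $X$.

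What remains is to show that every $\tmsp \in \Omega$ is $S$-invariant. The strategy is a minimality argument: extract by Zorn a non-empty weak-* closed $\Omega_0 \subset \Omega$ minimal with respect to being stable under all forward shifts $\tmsp \mapsto \hat{\mgr} * \tmsp$ produced by the construction in the second paragraph, and argue that $\Omega_0$ must reduce to a single element which is $S$-invariant and hence equals $\m$. This last implication is the main obstacle: translating the convolution-average invariance furnished by the dynamics into pointwise invariance under the individual isometries in $S$. I expect the key technical inputs to be the isometric nature of the action (which makes the convolution operators Wasserstein-nonexpansive) together with NDI, used to rule out any rigid component in $\tmsp$ that could prevent its support from filling out an $S$-orbit uniformly; once $\tmsp$ is known to be $S$-invariant, the uniqueness from the previous step forces $\tmsp = \m$ and thus $\Omega = \{\m\}$.
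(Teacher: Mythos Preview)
Your steps up through the transitivity of $S$ are fine, but the fourth paragraph---which you yourself flag as ``the main obstacle''---is a genuine gap, not a detail to fill in. Two concrete problems. First, the accumulation set $\Omega$ of the fixed sequence $\msp_n$ carries no well-defined semigroup action: your forward construction only produces, for each $\tmsp \in \Omega$, \emph{some} limits $\hat{\mgr}'_j \in K$ (depending on $\tmsp$ and on the chosen subsequence) with $\hat{\mgr}'_m * \cdots * \hat{\mgr}'_1 * \tmsp \in \Omega$; there is no fixed family of maps under which $\Omega$ is invariant, so the Zorn/minimality step is not well posed as stated. Second, even if you manufactured a minimal set and extracted a fixed point $\tmsp_0$ with $\mgr * \tmsp_0 = \tmsp_0$ for every $\mgr \in K$, passing from this \emph{stationarity in average} to invariance under each individual $g \in S$ is itself a Choquet--Deny type statement. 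Wasserstein nonexpansiveness gives nothing here (equality $W(\mgr * \tmsp, \m) = W(\tmsp, \m)$ does not force the integrand $g \mapsto g_* \tmsp$ to be constant, since $W(\cdot,\m)$ is convex but not strictly so), and NDI as formulated only constrains supports, not densities. The part you have deferred is essentially the whole theorem.

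For contrast, the paper does not argue via accumulation points at all. It proves the stronger uniform statement (Theorem~\ref{thm:limitMeas}), from which Theorem~\ref{thm:mainWeakConv} is immediate via Lemma~\ref{lm:semiinv}, by a geometric route: NDI is used to show that the intersection operator $T_{\mgr}(A) = \bigcap_{g \in \supp \mgr} g(A)$ strictly contracts a pseudo-metric $\mH$ on the hyperspace of closed sets uniformly over $K$ (Lemma~\ref{lm:sep}), which forces $\supp(\mgr_m * \cdots * \mgr_1 * \delta_x)$ to become $\eps$-dense in boundedly many steps (Proposition~\ref{prop:dens}); this density is then converted into a uniform lower bound on $\eps$-wide sets (Lemma~\ref{lm:right_form}) and a peeling decomposition that approximates $\m$. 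None of this machinery appears in your outline, and your suggested tools do not substitute for it.
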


\begin{rem}
    In fact, the convergence in Theorem \ref{thm:mainWeakConv} is uniform with respect to the sequence $\mgr_1, \mgr_2, \ldots$ and starting measure $\msp$ as long as the compact set $K$ is fixed (see Theorem \ref{thm:limitMeas}).
\end{rem}

\begin{rem} \label{cor:uniq}
    Theorem \ref{thm:mainWeakConv} implies that if there is at least one measure $\mgr \in \mMG$ that satisfies the \textbf{no deterministic images} condition then the measure $\m \in \mMsp$ that is invariant with respect to all isometries has to be unique. It easily follows from here that $\m$ is unique as soon as $G$ acts transitively on $X$ (every orbit is dense, or, equivalently, there is at least one dense orbit). 
\end{rem}

\begin{rem}
    Theorem \ref{thm:mainWeakConv} is similar to the classical It\^o-Kawada Theorem, for details see Section \ref{sec:rwOnGroups}.
\end{rem}

Using the recent result about abstract nonstationary Ergodic Theorem (see \cite{GK} and Theorem \ref{thm:nonstErg} below) we also deduce the following Ergodic Theorem and Large Deviation Type Estimate:

\begin{thm} \label{thm:mainErg}
    Consider a compact (in weak-* topology) $K \subset \mMG$, such that every $\mgr \in K$ satisfies \textbf{no deterministic images} condition. Let $\mgr_1, \mgr_2, \ldots$ be a sequence of measures in $K$. Then for any continuous observable $\varphi \in C(X, \mathbb{R})$ and any $x \in X$ almost surely
    \begin{equation} \label{isomErg}
        \frac{1}{n} \sum_{k = 0}^{n - 1} \varphi(g_k g_{k - 1} \ldots g_1 (x)) \to \int_{G} \varphi \dd \m \ \ \text{\rm as}\ \ \ n\to \infty,
    \end{equation}
    where the elements $g_1, g_2, \ldots$ are independent random elements in $G$ and $g_j$ is distributed with respect to $\mgr_j$.

    Moreover, an analogue of the Large Deviations Theorem holds. Namely, for any $\eps > 0$ there exist $C, \delta > 0$ such that for any $x \in X$ and any $n \in \N$
    \begin{equation} \label{isomLD}
        \p \left( \left| \frac{1}{n} \sum_{k = 0}^{n - 1} \varphi(g_k g_{k - 1} \ldots g_1 (x)) - \int_{G} \varphi \dd \m \right| > \eps \right) < C \exp(-\delta n),
    \end{equation}
    where
    \begin{equation*} 
        \p = \prod_{n=1}^\infty \mgr_n.
    \end{equation*}
\end{thm}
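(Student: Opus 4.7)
The plan is to reduce Theorem \ref{thm:mainErg} to the abstract nonstationary Ergodic Theorem from \cite{GK} (referenced as Theorem \ref{thm:nonstErg} in the paper) by feeding it the weak-* convergence supplied by Theorem \ref{thm:mainWeakConv}. Concretely, for a fixed starting point $x \in X$, apply Theorem \ref{thm:mainWeakConv} to the starting measure $\msp = \delta_x$. The conclusion is that $\mgr_n * \mgr_{n-1} * \ldots * \mgr_1 * \delta_x \to \m$ weak-*. By the remark after Theorem \ref{thm:mainWeakConv}, this convergence is uniform in the choice of $x$ and in the choice of the sequence $(\mgr_j) \subset K$. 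Rewritten in terms of expectations, this says that for any $\varphi \in C(X, \R)$,
\begin{equation*}
    \sup_{x \in X} \left| \E \, \varphi(g_n g_{n-1} \ldots g_1(x)) - \int_X \varphi \dd \m \right| \xrightarrow[n\to \infty]{} 0,
\end{equation*}
and the same bound holds uniformly over all sequences $(\mgr_j)$ drawn from the fixed compact set $K$.

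Next, I would verify that the process $y_n := g_n g_{n-1} \ldots g_1(x)$ fits the hypotheses of the abstract nonstationary ergodic theorem of \cite{GK}. Typically such a theorem requires that the conditional expectations of $\varphi(y_{n+k})$ given the sigma-algebra generated by $g_1, \ldots, g_n$ converge, uniformly in $n$ and in the conditioning, to $\int \varphi \dd \m$ as $k \to \infty$. This is exactly what the uniform weak-* statement above provides: conditionally on $g_1, \ldots, g_n$, the point $y_n$ is some deterministic element of $X$, and the further iterates $y_{n+k}$ are obtained by applying the independent isometries $g_{n+1}, \ldots, g_{n+k}$ (distributed according to the shifted sequence $\mgr_{n+1}, \ldots, \mgr_{n+k} \in K$) to this fixed point. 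The uniformity statement therefore guarantees the convergence at a rate that is independent of $n$ and of the conditioning point.

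Feeding this into Theorem \ref{thm:nonstErg} immediately yields the almost sure convergence \eqref{isomErg} and the exponential large-deviation bound \eqref{isomLD}, for every fixed continuous observable $\varphi$ and every starting point $x \in X$. The main technical obstacle I anticipate is purely bookkeeping: making sure that the hypotheses of the abstract theorem in \cite{GK} (which is stated in a general Markov-like framework) are matched exactly by the nonstationary isometric random walk, in particular that the uniform convergence of the one-point marginals translates correctly into the mixing-type hypothesis required there. No additional probabilistic input beyond Theorem \ref{thm:mainWeakConv} and its uniformity should be necessary.
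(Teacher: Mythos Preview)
Your proposal is correct and matches the paper's own proof: both verify the Standing Assumption of Theorem~\ref{thm:nonstErg} via the uniform weak-* convergence of Theorem~\ref{thm:limitMeas} (which you access through the remark after Theorem~\ref{thm:mainWeakConv}), and then invoke Theorem~\ref{thm:nonstErg}. The only step you leave slightly implicit is that Theorem~\ref{thm:nonstErg} yields convergence to the Ces\`aro averages $\tfrac{1}{n}\sum_{k=0}^{n-1}\int_X\varphi\,\dd\msp_k$ rather than directly to $\int_X\varphi\,\dd\m$, so one must also note (as the paper does) that the uniform convergence $\msp_k\to\m$ forces these averages to converge to $\int_X\varphi\,\dd\m$.
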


A huge class of examples where Theorems \ref{thm:mainWeakConv} and \ref{thm:mainErg} can be applied is formed by random walks on compact topological groups. For a detailed discussion of that setting see Section \ref{sec:rwOnGroups}. Other applications include: random rotations of a unit sphere in $\R^n$ or, more generally, isometries of compact Riemannian manifolds (see \cite{GN}), random isomorphisms of graphs, isometries of shift spaces, etc. Our proofs rely heavily on compactness of the space $X$. For results about random isometries of noncompact spaces see \cite{GNe, LV}.

Ergodic Theorems and Large Deviations for general stationary and nonstationary Markov processes are objects of intensive studies in probability and random dynamical systems. For various results about the Strong Law of Large Numbers (another name for the Ergodic Theorem) see \cite{Gi, LL1, LL2, 
Y1, Y2, ZYW} and references therein. Large Deviations in random dynamical systems are studied in \cite{CDK}. Large Deviation Type Estimates for nonhomogeneous Markov chains can be found in \cite{DS}. A recent monograph \cite{DSa} investigates various limit theorems including Large Deviations Theorem for nonstationary Markov processes. Finally, an abstract version of ergodic theorem that we use as a black box to establish our result (see Theorem \ref{thm:nonstErg} below) was recently obtained in \cite{GK}.

\subsection{Weak-* limit measure}

For what follows we need to use the Wasserstein distance. Slightly abusing notation we will denote the Wasserstein metric on both spaces $\mMsp$ and $\mMG$ by $W$. Recall that $W$ can be defined as follows:

\begin{defn} \label{def:Wass}
    Let $\msp_1, \msp_2$ be two probability measures in $\mMsp$ (or $\mMG$). Then the \emph{Wasserstein distance} between them is defined as
    \begin{equation*}
        W(\msp_1, \msp_2) = \inf_{\gamma} \iint_{X \times X} d(x, y) \dd \gamma(x, y) \quad \left( \text{or } \inf_{\gamma} \iint_{G \times G} d(x, y) \dd \gamma(x, y) \right),
    \end{equation*}
where the infimum is taken over all probability measures $\gamma$ on $(X \times X, \sigma( \mathcal{B} \times \mathcal{B} ))$ (or $(G \times G, \sigma( \mathcal{B}_{G} \times \mathcal{B}_{G} ))$) with the marginals (projections on the $x$ and $y$ coordinates) $P_x(\gamma) = \msp_1$ and $P_y(\gamma) = \msp_2$.
\end{defn}

\begin{rem}
    It is well known (see, for example \cite[Theorem 6.9]{Vi}) that convergence in Wasserstein metric is equivalent to the one in weak-* topology.
\end{rem}

In order to prove Theorem \ref{thm:mainErg} we need the following uniform version of Theorem \ref{thm:mainWeakConv}, that might be of interest by itself:

\begin{thm} \label{thm:limitMeas}
    Consider a compact (in weak-* topology) $K \subset \mMG$, such that any $\mgr \in K$ satisfies \textbf{no deterministic images} condition. Then for any $\eps > 0$ there exists $m \in \N$, such that for any sequence of measures $\mgr_1, \mgr_2, \ldots, \mgr_m \in K$ and any starting Borel probability measure $\msp$ on $X$ the following inequality holds:
    \begin{equation} \label{mainIneq}
        W(\mgr_{m} * \mgr_{m - 1} * \ldots * \mgr_{1} * \msp, \m) < \eps.
    \end{equation}
\end{thm}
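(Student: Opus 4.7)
The plan is to argue by contradiction, using two structural facts about convolution together with compactness of $K$ and of $\mMsp$, to bootstrap the pointwise convergence of Theorem \ref{thm:mainWeakConv} into uniform convergence. Both structural facts are elementary. First, since every $g \in G$ preserves $d$, its pushforward preserves $W$; averaging over $\mu \in \mMG$ and using convexity of the transport cost yields the non-expansive bound
\[
    W(\mu * \nu_1, \mu * \nu_2) \le W(\nu_1, \nu_2), \qquad \mu \in \mMG,\ \nu_1, \nu_2 \in \mMsp.
\]
Second, $g_*\m = \m$ for every $g \in G$ gives $\mu * \m = \m$ for every $\mu \in \mMG$. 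Together these say that each convolution $\nu \mapsto \mu * \nu$ is a non-expansive self-map of $(\mMsp, W)$ having $\m$ as a fixed point.

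Suppose \eqref{mainIneq} fails for some $\eps > 0$. Then for every $m \in \N$ one can choose $(\mu_1^{(m)}, \ldots, \mu_m^{(m)}) \in K^m$ and $\nu^{(m)} \in \mMsp$ with
\[
    W\bigl(\mu_m^{(m)} * \cdots * \mu_1^{(m)} * \nu^{(m)},\, \m\bigr) \ge \eps.
\]
Both $K$ and $\mMsp$ are compact and metrizable (on the compact set $K$ the weak-* topology coincides with $W$), so a standard diagonal extraction produces a subsequence $m_j \to \infty$ along which $\mu_k^{(m_j)} \to \mu_k \in K$ for every fixed $k \in \N$ and $\nu^{(m_j)} \to \nu_\infty \in \mMsp$.

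I would then apply Theorem \ref{thm:mainWeakConv} to the limit sequence $(\mu_k)_{k\ge 1} \subset K$ with starting measure $\nu_\infty$ and choose $N$ so that $W(\mu_N * \cdots * \mu_1 * \nu_\infty, \m) < \eps/3$. The finite-depth convolution $(\mu_1, \ldots, \mu_N, \nu) \mapsto \mu_N * \cdots * \mu_1 * \nu$ is jointly continuous in the weak-* / Wasserstein topology (being a composition of the jointly continuous binary convolution), so for all sufficiently large $j$,
\[
    W\bigl(\mu_N^{(m_j)} * \cdots * \mu_1^{(m_j)} * \nu^{(m_j)},\, \m\bigr) < \tfrac{2\eps}{3}.
\]
Finally, applying the remaining factors $\mu_{N+1}^{(m_j)}, \ldots, \mu_{m_j}^{(m_j)}$ to both sides and invoking the two structural facts ($\m$ is fixed and each convolution is non-expansive) yields
\[
    W\bigl(\mu_{m_j}^{(m_j)} * \cdots * \mu_1^{(m_j)} * \nu^{(m_j)},\, \m\bigr) \le W\bigl(\mu_N^{(m_j)} * \cdots * \mu_1^{(m_j)} * \nu^{(m_j)},\, \m\bigr) < \tfrac{2\eps}{3},
\]
contradicting the choice of the counterexamples.

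The main point requiring care is the interplay between the diagonal extraction and the propagation from finite to arbitrary depth: Theorem \ref{thm:mainWeakConv} along the limit sequence only controls the first $N$ convolutions, and it is the non-expansive/fixed-point structure that lifts this $N$-step control to all $m_j \ge N$, thereby closing the argument. Verifying joint weak-* continuity of finite convolutions and carrying out the diagonal extraction correctly are the only technical items; neither poses a real difficulty given compactness of $K$ and $\mMsp$.
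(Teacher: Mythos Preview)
Your argument is internally consistent, but it is \emph{circular} in the context of this paper. You invoke Theorem~\ref{thm:mainWeakConv} as a black box to prove Theorem~\ref{thm:limitMeas}; however, the paper does not prove Theorem~\ref{thm:mainWeakConv} independently. Theorem~\ref{thm:limitMeas} is the only place where any convergence to $\m$ is actually established, and Theorem~\ref{thm:mainWeakConv} is its immediate corollary (see the Remark following Theorem~\ref{thm:mainWeakConv}). So in step~3 of your outline, when you ``apply Theorem~\ref{thm:mainWeakConv} to the limit sequence $(\mu_k)_{k\ge 1}$,'' you are assuming precisely what has to be proved.

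Everything else in your sketch is fine: the non-expansiveness $W(\mu*\nu,\m)\le W(\nu,\m)$ is exactly Lemma~\ref{lm:semiinv}, the fixed-point property $\mu*\m=\m$ is immediate, the diagonal extraction and joint continuity of finite convolutions are routine, and the way you propagate the $N$-step estimate to all $m_j\ge N$ via non-expansion is correct. If one already had a pointwise (non-uniform) proof of Theorem~\ref{thm:mainWeakConv}, your bootstrapping would be an elegant route to uniformity. But no such independent proof exists here. The paper's proof of Theorem~\ref{thm:limitMeas} is instead constructive: Proposition~\ref{prop:dens} shows that supports of long random convolutions become $\eps$-dense \emph{uniformly} in the sequence $\mu_1,\dots,\mu_m\in K$ and the starting point; Lemma~\ref{lm:right_form} upgrades this to a uniform lower bound on the mass of every $\eps$-wide set; and then an explicit splitting/peeling of the convolved measure against a fixed partition into $\eps$-wide sets yields~\eqref{mainIneq}. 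That direct argument is where the actual analytic content lives; your compactness bootstrap bypasses it only by assuming its conclusion.
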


Some of the proofs in this work repeat those from our previous paper that deals with the abelian case (see \cite{M}). We, however still include them here for the convenience of the reader.

\section{Random walks on compact groups} \label{sec:rwOnGroups}


In this section we consider the case of a compact topological group acting on itself by left shifts. In that setting both bi-invariant metric $d$ and the bi-invariant probability measure $\m$ arise naturally from the group structure. A bi-invariant metric exists for any compact second countable topological group, see \cite[Chapter II, Theorem 8.3]{HR}. The unique regular bi-invariant Borel probability measure $\m$ on $G$ is called Haar measure and also exists for any compact topological group (see \cite[Chapter XI]{Ha}).

As we can see from the previous paragraph, the setting of the group of isometries $G$ acting on a metric compact set $X$ is formally more general than the one of a second countable compact group acting on itself. However, if the group of isometries $G$ acts transitively on $X$ (all orbits are dense) then that action is a topological factor of the action of $G$ on itself (see \cite[Section 2.5]{G}). To have at least one measure in $\mMG$ that satisfies \textbf{no deterministic images} condition the action of $G$ on $X$ has to be transitive. Hence in our context theorems in these two settings are equivalent.

Limit theorems for random walks on compact groups have been studied intensively since 1940s. Generalizing results from \cite{L} for a random walk on $\R/\Z$ several convergence results for the sequence $\mgr^{*n}$ and various forms of Ergodic Theorem were obtained in the following works: \cite{KI, Ka, PRU, U, Mu}. In \cite{Kl} one will find a uniform stationary version of Ergodic Theorem and first results on the weak-* convergence of a nonstationary sequence $\mgr_{n} * \mgr_{n - 1} * \ldots * \mgr_{1}$, namely, with a proper choice of constant $g_n \in G$ the sequence $g_n \cdot [\mgr_{n} * \mgr_{n - 1} * \ldots * \mgr_{1}]$ converges. However, the nature of the limit measure and the sequence $g_n$ is not specified. In \cite{S} an exact criterion of weak-* convergence in the stationary case is proved (see Theorem \ref{thm:IK}) for compact Hausdorff (and not necessarily metrizable) groups. In \cite{BE} it is proved (in noncommutative case) that Ergodic Theorem for a stationary random walk is equivalent to the smallest closed subgroup containing $\supp(\mgr)$ being equal to the whole group (such measures $\mgr$ are usually referred to as {\bf adapted}, see Definition \ref{def:adapted}). Results about convergence of $\mgr^{*n}$ in the sense of total variation can be found in \cite{B, RX, AG}. For a detailed survey of results about random walks on flat tori see \cite{DF}. For functional study of Markov operators associated with random walks on groups see \cite{D} and references therein. Finally, in the recent works \cite{Bo1, Bo2} the Central Limit Theorem and Law of Iterated Logarithm for the stationary random walks on groups was  established. The works mentioned above also include several nonstationary forms of the Ergodic Theorem. However, all those theorems impose some additional Doeblin-type conditions on measures $\mgr_1, \mgr_2, \ldots, \mgr_n$, which is not required for our approach. For abelian groups our results admit less technically involved proofs, see \cite{M} for details.

To formulate one of the most famous results, that is usually referred to as It\^o-Kawada Theorem we will need the following definitions (we use standard terminology, which can be found, for example, in \cite{Bo1} and \cite{D}):

\begin{defn} \label{def:adapted}
    A measure $\mgr \in \mMG$ is called \textbf{adapted} if the smallest closed subgroup containing its support $\supp(\mgr)$ is equal to $G$.
\end{defn}

\begin{rem}
    It is easy to see that for a compact group $G$ that definition is equivalent to the following: the smallest closed semi-group containing $\supp(\mgr)$ is equal to $G$.
\end{rem}

\begin{defn}
    A measure $\mgr \in \mMG$ is called \textbf{coset aperiodic} if $\supp{\mgr}$ is not contained in any coset of any proper closed subgroup of $G$.
\end{defn}

\begin{defn}
    A measure $\mgr \in \mMG$ is called \textbf{strictly aperiodic} if $\supp{\mgr}$ is not contained in any coset of any proper closed normal subgroup of $G$.
\end{defn}

\begin{rem}
    Note that while these two definitions might seem very similar, they are not equivalent. If a measure $\mgr$ is \textbf{coset aperiodic} it is obviously \textbf{strictly aperiodic}, but the converse is not true. To see that it is enough to consider any not  normal subgroup. Notice also, that \textbf{coset aperiodic} measure has to be \textbf{adapted}.
\end{rem}

The result of It\^o, Kawada, and Stromberg can be formulated as follows:
\begin{thm}[\cite{KI, S}] \label{thm:IK}
    Let $G$ be a compact topological group. Assume that $\mgr \in \mMG$ is \textbf{adapted}. Then the weak-* limit $\lim_{n \to \infty} \mgr^{*n}$ exists if and only if $\mgr$ is \textbf{strictly aperiodic}. Moreover, if the said limit exists, it is equal to the Haar measure $\m$.
\end{thm}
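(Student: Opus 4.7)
The plan is to deduce Theorem~\ref{thm:IK} from Theorem~\ref{thm:mainWeakConv} applied to the action of $G$ on $X=G$ by left translation. For the easy direction, if $\mgr^{*n}\to\nu$ weak-*, continuity of convolution yields $\mgr*\nu=\nu$, so $\nu$ is invariant under left translation by every $g\in\supp(\mgr)$; the set of such $g$ is a closed subgroup, and by adaptedness it equals $G$, forcing $\nu=\m$. If $\mgr$ were not strictly aperiodic, then $\supp(\mgr)\subset g_0 N$ for a proper closed normal subgroup $N$, and the quotient projection $\pi\colon G\to G/N$ would give $\pi_*(\mgr^{*n})=\delta_{g_0^n N}$, a sequence of Dirac masses whose weak-* limits must themselves be Dirac, contradicting the fact that $\pi_*\m$ is the non-atomic Haar measure on $G/N$.

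For the converse, assume $\mgr$ is adapted and strictly aperiodic. The key claim I would prove is that there exists $k\in\N$ for which $\mgr^{*k}$ satisfies the \textbf{no deterministic images} condition on $X=G$. Once this is established, Theorem~\ref{thm:mainWeakConv} applied to the singleton $K=\{\mgr^{*k}\}$ and constant sequence $\mgr^{*k},\mgr^{*k},\ldots$ with starting measure $\delta_e$ yields $\mgr^{*(kn)}\to\m$; for each residue $r\in\{0,\ldots,k-1\}$, $\mgr^{*(kn+r)}=\mgr^{*r}*\mgr^{*(kn)}\to\mgr^{*r}*\m=\m$ by weak-* continuity of convolution and absorption of Haar, giving $\mgr^{*n}\to\m$ along all residues. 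To verify the key claim, set $G_k:=\overline{\langle\supp(\mgr)^k\cdot\supp(\mgr)^{-k}\rangle}$; one checks that $\mgr^{*k}$ has no deterministic images iff $G_k=G$ (if $G_k$ is proper, the pair $A=g_0^{-1}G_kg_0$, $B=g_0A$ for any $g_0\in\supp(\mgr)^k$ witnesses deterministic images, and conversely any such pair forces $\supp(\mgr)^k\supp(\mgr)^{-k}\subset\mathrm{Stab}(B)$, a proper closed subgroup). The chain $G_k$ is increasing since $ab^{-1}=(ac)(bc)^{-1}$ for any $c\in\supp(\mgr)$, and the telescoping identity
\begin{equation*}
a_1\cdots a_k\,b_k^{-1}\cdots b_1^{-1}=\prod_{j=1}^k(b_1\cdots b_{j-1})(a_jb_j^{-1})(b_1\cdots b_{j-1})^{-1}
\end{equation*}
expresses each element of $G_k$ as a product of conjugates of elements of $S_1:=\supp(\mgr)\supp(\mgr)^{-1}$, so that $G_\infty:=\overline{\bigcup_k G_k}$ is precisely the smallest closed normal subgroup of $G$ containing $S_1$. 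Strict aperiodicity then forces $G_\infty=G$, since otherwise $\supp(\mgr)$ would lie in a single coset of the proper closed normal subgroup $G_\infty$.

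The main obstacle is the final step of upgrading $G_\infty=G$ to $G_k=G$ at some finite stage $k$. For compact Lie groups this is automatic: any increasing chain of closed subgroups has bounded dimension and (once the dimension stabilizes) a finite component structure, so it must stabilize after finitely many steps. For general compact metrizable $G$, I would reduce to the Lie case by projecting onto the finite-dimensional unitary quotients provided by the Peter--Weyl theorem and assemble a uniform $k$ through a compactness argument on the space of closed subgroups of $G$ in the Chabauty topology.
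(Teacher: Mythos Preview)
Your easy direction is essentially correct; the only slip is calling $\pi_*\m$ ``non-atomic'' on $G/N$---it may well be atomic when $G/N$ is finite, but what matters is that it is not a Dirac mass, which suffices.

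For the hard direction your route diverges from the paper's, and the divergence is exactly at the point you flag as ``the main obstacle''. The paper does \emph{not} attempt to show that some $\mgr^{*k}$ is coset aperiodic (equivalently, satisfies the full \textbf{no deterministic images} condition). Instead it introduces a quantified weakening, the \textbf{no deterministic $\eps$-images} condition (Definition preceding Lemma~\ref{lm:Aper}), and proves in Lemma~\ref{lm:Aper}(ii) that for each fixed $\eps>0$ there exists $n=n(\eps)$ with $\mgr^{*n}$ satisfying the $\eps$-version. The density argument (Proposition~\ref{prop:dens}) and the convergence argument (Theorem~\ref{thm:limitMeas}) are then re-run with this $\eps$-dependent input. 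Crucially, $n(\eps)$ is allowed to go to infinity as $\eps\to 0$, so the paper never needs a single power that works uniformly.

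Your proposed upgrade from $G_\infty=G$ to $G_k=G$ for some finite $k$ is a genuine gap, and the Peter--Weyl/Chabauty sketch does not close it. Projecting to a Lie quotient $G/M_i$ does give, for each $i$, a finite $k_i$ with $\pi_i(G_{k_i})=G/M_i$, but there is no mechanism preventing $k_i\to\infty$ as $M_i\searrow\{e\}$; the Chabauty limit of the $G_k$ is $G_\infty=G$, not any particular $G_k$, so compactness of the space of closed subgroups yields nothing. In effect you are asking whether an increasing chain of closed subgroups with dense union in a compact group must stabilize---false already in $\prod_n \Z/2\Z$---and the extra structure coming from a single $\mgr$ is not exploited by your sketch. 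The paper's $\eps$-strategy is designed precisely to sidestep this: since the goal is $\eps$-density of supports for each $\eps$, it is enough to defeat deterministic images only at scale $\eps$, and that can be done with a power depending on $\eps$.
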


Theorem \ref{thm:mainWeakConv} has the following corollary, that can be considered a nonstationary version of Theorem \ref{thm:IK}:

\begin{thm} \label{thm:IKnonst}
    Let $G$ be a compact second countable topological group. Consider a compact (in weak-* topology) $K \subset \mMG$, such that any $\mgr \in K$ is \textbf{coset aperiodic}. 
    Then for any starting measure $\msp \in \mMG$ we have
    \begin{equation*}
        \lim_{n \to \infty} \mgr_{n} * \mgr_{n - 1} * \ldots * \mgr_1 * \msp = \m.
    \end{equation*}
\end{thm}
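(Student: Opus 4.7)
My plan is to reduce Theorem~\ref{thm:IKnonst} directly to Theorem~\ref{thm:mainWeakConv} by viewing $G$ as acting on itself by left multiplication. Since $G$ is compact second countable it carries a bi-invariant metric $d$, and each map $L_g \colon h \mapsto gh$ is an isometry of $(G, d)$. The isometry-invariant Borel probability measure on $(G,d)$ guaranteed by the existence result cited in the footnote is in particular left-invariant, hence coincides with the Haar measure $\m$ by its uniqueness. So in the language of Theorem~\ref{thm:mainWeakConv} we take $X = G$ and identify the invariant measure with $\m$.

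The heart of the argument is the implication \textbf{coset aperiodic} $\Rightarrow$ \textbf{no deterministic images}. Suppose for contradiction that $\mgr \in K$ is coset aperiodic yet there exist proper closed $A, B \subset G$ with $g A = B$ for every $g \in \supp(\mgr)$. For any $g, h \in \supp(\mgr)$ we then have $g A = h A$, so $h^{-1} g$ lies in
\begin{equation*}
    H = \{x \in G : x A = A\}.
\end{equation*}
The set $H$ is a closed subgroup: writing $\{x : xA \subseteq A\} = \bigcap_{a\in A} A a^{-1}$ (and similarly for $x^{-1}A \subseteq A$) shows closedness, while the group axioms are immediate. Moreover $H$ is proper, since $H = G$ would give $GA = A$, which combined with $A \neq \emptyset$ and the transitivity of left multiplication forces $A = G$. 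Fixing any $g_0 \in \supp(\mgr)$ we obtain $\supp(\mgr) \subseteq g_0 H$, a coset of a proper closed subgroup, contradicting coset aperiodicity.

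The rest is routine. Pushing each $\mgr_n$ forward under the continuous embedding $g \mapsto L_g$ of $G$ into $\mathrm{Iso}(G, d)$ produces a weak-* compact family of measures on $\mathrm{Iso}(G, d)$ that still satisfies the no deterministic images condition, and the convolutions of these pushforwards acting on $X = G$ agree with the ordinary group convolutions $\mgr_n * \cdots * \mgr_1 * \msp$. Applying Theorem~\ref{thm:mainWeakConv} to this pushforward sequence and starting measure $\msp$ then yields $\mgr_n * \cdots * \mgr_1 * \msp \to \m$ in the weak-* topology, which is the desired conclusion.

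The main obstacle is the middle paragraph: producing the stabilizer $H$ as a proper closed subgroup containing all quotients $h^{-1} g$ of elements of $\supp(\mgr)$. Everything else is a transparent dictionary between the isometric and group-theoretic frameworks.
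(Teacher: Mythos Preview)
Your proposal is correct and follows essentially the same route as the paper: the paper's proof is a one-line appeal to Theorem~\ref{thm:mainWeakConv} together with part~{\bf (i)} of Lemma~\ref{lm:Aper}, whose proof is precisely your stabilizer argument with $H=\{x\in G: xA=A\}$. Your extra care in pushing the measures forward along $g\mapsto L_g$ into the full isometry group and identifying the isometry-invariant measure with Haar is exactly the dictionary the paper leaves implicit in Section~\ref{sec:rwOnGroups}.
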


\begin{proof}
    Proof of that fact is a straightforward compilation of Theorem \ref{thm:mainWeakConv} and part {\bf (i)} of Lemma \ref{lm:Aper}.
\end{proof}

\begin{rem}
    A uniform version of Theorem \ref{thm:IKnonst} can also be easily obtained by combining Theorem \ref{thm:limitMeas} and part {\bf (i)} of Lemma \ref{lm:Aper}.
\end{rem}

\begin{rem}
    Notice that in Theorem \ref{thm:IKnonst} the condition of every measure $\mgr \in K$ being \textbf{coset aperiodic} cannot be replaced with every $\mgr \in K$ being \textbf{adapted} and \textbf{strictly aperiodic} (see Example \ref{ex:notAdopt}).
\end{rem}

It is also worth mentioning that an alternative prove of the classical result of It\^o and Kawada (for second countable groups) can be obtained using our methods. For an outline of the proof please see Section \ref{sec:IKproof}.

The following nonstationary Ergodic Theorem for random walks on compact metrizable groups is a direct corollary of Theorem \ref{thm:mainErg} and part {\bf (i)} of Lemma \ref{lm:Aper}:

\begin{thm}
    Consider a compact (in weak-* topology) $K \subset \mMG$, such that every $\mgr \in K$ is \textbf{coset aperiodic}. Let $\mgr_1, \mgr_2, \ldots$ be a sequence of measures in $K$. Then for any continuous observable $\varphi \in C(G, \mathbb{R})$ and any $g_0 \in G$ almost surely
    \begin{equation*}
        \frac{1}{n} \sum_{k = 0}^{n - 1} \varphi(g_k g_{k - 1} \ldots g_0 ) \to \int_{G} \varphi \dd \m \ \ \text{\rm as}\ \ \ n\to \infty,
    \end{equation*}
    where the elements $g_1, g_2, \ldots$ are independent random elements in $G$ and $g_j$ is distributed with respect to $\mgr_j$.

    Moreover, an analogue of the Large Deviations Theorem holds. Namely, for any $\eps > 0$ there exist $C, \delta > 0$ such that for any $g_0 \in G$ and any $n \in \N$
    \begin{equation*}
        \p \left( \left| \frac{1}{n} \sum_{k = 0}^{n - 1} \varphi(g_k g_{k - 1} \ldots g_0) - \int_{G} \varphi \dd \m \right| > \eps \right) < C \exp(-\delta n),
    \end{equation*}
    where
    \begin{equation*} 
        \p = \prod_{n=1}^\infty \mgr_n.
    \end{equation*}
\end{thm}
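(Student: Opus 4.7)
The plan is to deduce this theorem as a direct specialization of Theorem \ref{thm:mainErg}, taking the compact metric space to be $G$ itself and letting $G$ act on itself by left translations. First I would equip $G$ with a bi-invariant metric $d$, whose existence for any compact second countable topological group is recalled at the beginning of Section \ref{sec:rwOnGroups} (following \cite{HR}). Every left translation $L_g : h \mapsto gh$ is then an isometry of $(G, d)$, so $G$ embeds canonically into its own isometry group, and the Haar measure on $G$ coincides with the bi-invariant probability measure $\m$ that appears in the isometry framework.

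Next I would verify that every $\mgr \in K$ satisfies the \textbf{no deterministic images} condition of Definition \ref{def:strictAper}. This is precisely the content of part \textbf{(i)} of Lemma \ref{lm:Aper}: coset aperiodicity of $\mgr$ rules out the existence of proper closed subsets $A, B \subset G$ with $g(A) = B$ for every $g \in \supp(\mgr)$. Since $K$ is already assumed to be weak-* compact in $\mMG$, all hypotheses of Theorem \ref{thm:mainErg} are now in force under the identification $X = G$.

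Finally, taking $x = g_0 \in G$ as the starting point, the trajectory $g_k g_{k - 1} \ldots g_1 (x)$ in $X$ becomes precisely the product $g_k g_{k - 1} \ldots g_0 \in G$ from the statement, and $\int_X \varphi \dd \m$ is the Haar integral of $\varphi$. Both the almost sure convergence \eqref{isomErg} and the large deviation bound \eqref{isomLD} from Theorem \ref{thm:mainErg} therefore transfer verbatim, with the same constants $C, \delta$ depending only on $K$, $\varphi$ and $\eps$, and uniformly in the starting point $g_0$. There is no genuine obstacle here: the real analytic work is carried out once in the proofs of Theorem \ref{thm:mainErg} and Lemma \ref{lm:Aper}, and the present theorem is pure bookkeeping on top of them.
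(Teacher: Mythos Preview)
Your proposal is correct and matches the paper's own approach: the paper states this theorem as ``a direct corollary of Theorem \ref{thm:mainErg} and part {\bf (i)} of Lemma \ref{lm:Aper}'', which is precisely the reduction you outline (take $X = G$ with a bi-invariant metric, use Lemma \ref{lm:Aper}(i) to convert coset aperiodicity into the \textbf{no deterministic images} condition, and read off \eqref{isomErg} and \eqref{isomLD}).
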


\section{Proof of Theorem \ref{thm:mainErg}: Ergodic Theorem and large deviations} 

In order to formulate the result of Gorodetski and Kleptsyn that we will use to prove Theorem \ref{thm:mainErg} we will need the following

\vspace{5pt}
{\bf Standing Assumption:} {\it We will say that a sequence of distributions \\
$\mgr_1, \mgr_2, \mgr_3, \ldots$ on $C(X, X)$ satisfies the Standing Assumption if for any $\delta>0$ there exists $m\in \mathbb{N}$ such that the images of any two initial measures after averaging over $m$ random steps after any initial moment $n$ become $\delta$-close to each other in terms of Wasserstein distance:
\begin{equation*}
    W (\mgr_{n+m}*\ldots *\mgr_{n+1} *\msp, \mgr_{n+m}*\ldots *\mgr_{n+1} *\msp') < \delta
\end{equation*}
for any probability measures $\msp$ and $\msp'$ on $X$ and any $n \in \N$.
}
\vspace{5pt}

Now the nonstationary stochastic version of Birkhoff Ergodic Theorem can be formulated as follows :
\begin{thm}[{\cite[Theorem 3.1]{GK}}]\label{thm:nonstErg}
    Consider a sequence of measures $\mgr_1, \mgr_2, \ldots$ in $\mMG$ that satisfies the \textbf{Standing Assumption} above and a sequence of independent random maps $f_1, f_2, \ldots \in C(X, X)$, where $f_j$ is distributed with respect to $\mgr_j$. Given any Borel probability measure $\msp_0$ on $X$, define
    \begin{equation*}
        \msp_{n}:=\mgr_n*\msp_{n-1}, \quad n=1,2,\dots.
    \end{equation*}

    Then for any $\varphi\in C(X, \mathbb{R})$ and any $x\in X$, almost surely
    \begin{equation} \label{nonstErg}
        \frac{1}{n}\left|\sum_{k = 0}^{n - 1} \varphi(f_k\circ \ldots \circ f_1(x))-\sum_{k = 0}^{n - 1} \int_X \varphi \dd\msp_k \right|\to 0 \ \ \text{\rm as}\ \ \ n\to \infty.
    \end{equation}

    Moreover, an analogue of the Large Deviations Theorem holds. Namely, for any $\eps > 0$ there exist $C, \delta > 0$ such that for any $x \in X$
    \begin{equation} \label{nonstLD}
        \forall n \in \mathbb{N}, \quad \p \left( \frac{1}{n}\left|\sum_{k = 0}^{n - 1} \varphi(f_k\circ \ldots \circ f_1(x))-\sum_{k = 0}^{n - 1} \int_X \varphi \dd\nu_k \right| > \eps \right) < C \exp(-\delta n),
    \end{equation}
    where
    \begin{equation*} 
        \p = \prod_{n=1}^\infty \mgr_n.
    \end{equation*}
\end{thm}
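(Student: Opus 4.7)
The plan is to establish both \eqref{nonstErg} and \eqref{nonstLD} via a Poisson-equation style martingale decomposition, followed by Azuma--Hoeffding and Borel--Cantelli. Since $X$ is compact, $\varphi$ is uniformly continuous and can be uniformly approximated by Lipschitz functions; it therefore suffices to treat an $L$-Lipschitz $\varphi$. Write $\mathcal{F}_k = \sigma(f_1,\dots,f_k)$ and $y_k = f_k\circ\cdots\circ f_1(x)$, and set
\[
h(t) := \sup_{n,\msp,\msp'} W\bigl(\mgr_{n+t}*\cdots*\mgr_{n+1}*\msp,\; \mgr_{n+t}*\cdots*\mgr_{n+1}*\msp'\bigr),
\]
which tends to $0$ by the Standing Assumption.

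The core of the argument is the Poisson-type function
\[
\Phi_n(y) := \sum_{k \geq n}\left( \int_X \varphi \, d(\mgr_k*\cdots*\mgr_{n+1}*\delta_y) - \int_X \varphi \, d\msp_k \right),
\]
with the $k=n$ term read as $\varphi(y) - \int\varphi\,d\msp_n$. Iterating the relation $\msp_k = \mgr_k*\msp_{k-1}$, one rewrites each summand as $\int \bigl[\int\varphi\,d(\mgr_k*\cdots*\mgr_{n+1}*\delta_y) - \int\varphi\,d(\mgr_k*\cdots*\mgr_{n+1}*\delta_z)\bigr] d\msp_n(z)$, whose modulus is at most $L\cdot h(k-n)$ by the Kantorovich--Rubinstein duality. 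A direct computation using the linearity of convolution in the initial marginal then yields the coboundary identity
\[
\varphi(y_n) - \int\varphi\,d\msp_n \;=\; \Phi_n(y_n) - \E[\Phi_{n+1}(y_{n+1}) \mid \mathcal{F}_n].
\]
Summing over $0\le n\le N-1$ and telescoping, the left-hand side equals a bounded boundary term $\Phi_0(x) - \E[\Phi_N(y_N)\mid\mathcal{F}_{N-1}]$ plus the martingale
\[
M_N := \sum_{n=1}^{N-1}\bigl(\Phi_n(y_n) - \E[\Phi_n(y_n)\mid\mathcal{F}_{n-1}]\bigr),
\]
whose increments are bounded by $2\sup_n\|\Phi_n\|_\infty$. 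Azuma--Hoeffding then delivers $\p(|M_N| > \eps N) \leq 2\exp(-c\eps^2 N)$, from which \eqref{nonstLD} follows, and \eqref{nonstErg} is immediate from Borel--Cantelli.

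The hard part is to extract from the qualitative Standing Assumption ($h(t)\to 0$) the quantitative summability $\sum_{t \geq 0} h(t) < \infty$ needed to make $\Phi_n$ well-defined and $\sup_n\|\Phi_n\|_\infty$ finite. Since the Standing Assumption supplies no a priori rate, I expect summability to be obtained by iterating the assumption in tandem with the coupling representation of the Wasserstein distance: once two measures are driven within $\delta \ll \Diam(X)$ of each other in Wasserstein, a Dobrushin-type compression argument should give geometric decay $h(t)\le C\rho^t$ for some $\rho<1$, after which the concentration argument above closes verbatim.
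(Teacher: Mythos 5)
First, a remark on scope: the paper does not prove this statement at all --- Theorem \ref{thm:nonstErg} is imported verbatim from \cite[Theorem 3.1]{GK} and used as a black box, so there is no internal proof to compare against. Judged on its own terms, your proposal has a genuine gap, and it sits exactly where you flagged it: the summability $\sum_{t\ge 0} h(t)<\infty$ needed to make $\Phi_n$ well defined and uniformly bounded does not follow from the Standing Assumption, and the Dobrushin-type compression you invoke to repair this does not work. The Standing Assumption says only that for each $\delta$ there is an $m=m(\delta)$ such that the image of the \emph{entire} simplex of probability measures under $\msp\mapsto\mgr_{n+m}*\cdots*\mgr_{n+1}*\msp$ has Wasserstein diameter less than $\delta$. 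Iterating this gives $h(t)<\delta$ for every $t\ge m(\delta)$ (apply the assumption at initial moment $n+s$ to the two already-evolved measures), but it never improves the bound: a map sending the whole space into a set of diameter $\delta$ composed with another such map still only sends the whole space into a set of diameter $\delta$. A genuine Dobrushin contraction $W(T\alpha,T\alpha')\le\rho\,W(\alpha,\alpha')$ is a strictly stronger hypothesis, and since the $f_j$ in Theorem \ref{thm:nonstErg} are arbitrary continuous maps (not isometries or contractions), single steps can even expand Wasserstein distances. So $h$ may tend to $0$ slower than any summable rate, the series defining $\Phi_n$ need not converge, and the martingale increments are not bounded; the Azuma step collapses.

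The way to close the argument --- and, in essence, what \cite{GK} does --- is to avoid the Poisson equation entirely, so that only \emph{one} value of $m$ is ever needed. Fix $\eps>0$, approximate $\varphi$ by an $L$-Lipschitz function, choose $m=m(\delta)$ with $L\delta<\eps/4$, and split the time axis into residue classes modulo $m$. For each residue $i$, the sequence $\varphi(y_{jm+i})-\E\bigl[\varphi(y_{jm+i})\mid\mathcal{F}_{(j-1)m+i}\bigr]$ is a martingale difference array with increments bounded by $2\|\varphi\|_\infty$, while Kantorovich--Rubinstein duality together with the Standing Assumption (applied to the initial measures $\delta_{y_{(j-1)m+i}}$ and $\msp_{(j-1)m+i}$) shows that the compensator $\E\bigl[\varphi(y_{jm+i})\mid\mathcal{F}_{(j-1)m+i}\bigr]$ differs from $\int_X\varphi\dd\msp_{jm+i}$ by at most $L\delta$. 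Applying Azuma--Hoeffding to each of the $m$ interleaved martingales and taking a union bound yields (\ref{nonstLD}) with an $O(\delta)$ error in the centering, which is absorbed since $\delta$ was arbitrary; (\ref{nonstErg}) then follows by Borel--Cantelli as you say. Your telescoping identity for $\Phi_n$ is algebraically correct, and the rest of your scheme (Lipschitz approximation, Azuma, Borel--Cantelli) is sound; it is only the passage from the qualitative hypothesis to a summable mixing rate that cannot be made, and the block decomposition is the standard device for dispensing with it.
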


Let us now show that Theorems \ref{thm:limitMeas} and \ref{thm:nonstErg} together imply Theorem \ref{thm:mainErg}.

\begin{proof}[Proof of Theorem \ref{thm:mainErg}]
    It follows from inequality (\ref{mainIneq}) that for any
    $\mgr_1, \mgr_2, \ldots \in K$ the \textbf{Standing Assumption} holds. Applying Theorem \ref{thm:nonstErg} we obtain that statements (\ref{nonstErg}) and (\ref{nonstLD}) hold. Since convergence in Wasserstein metric is equivalent to weak-* convergence inequality (\ref{mainIneq}) also implies that
    \begin{equation*}
        \frac{1}{n} \sum_{k = 0}^{n - 1} \int_{X} \varphi \dd \msp_k \to \int_{X} \varphi \dd \m.
    \end{equation*}
    After that formulae (\ref{isomErg}) and (\ref{isomLD}) follow directly from (\ref{nonstErg}) and (\ref{nonstLD}).
\end{proof}

\section{Proof of Theorem \ref{thm:limitMeas}: weak-* convergence of measures}

Recall that $(X, d)$ is a metric space. In what follows we will be using following notations: for a set $A \subset X$ we will denote by $\overline{A}$ its closure and by $A^{c}$ its complement. $\ball(A, r)$ will stand for an (open) $r$-neighborhood of $A$. For $x \in X$ notation $\ball(x, r)$ will stand for $\ball(\{x\}, r)$ -- the open ball of radius $r$ centered at $x$.

The key difference between the abelian and non-abelian cases is the proof of the following

\begin{prop} \label{prop:dens}
    Consider a compact $K \subset \mMG$, such that any $\mgr \in K$ satisfies the \textbf{no deterministic images} condition. For any $\eps > 0$ there exists $m \in \N$, such that for any $\mgr_1, \mgr_2, \ldots, \mgr_m \in K$ and any $x \in X$ the set $\supp(\mgr_{m} * \mgr_{m - 1} * \ldots * \mgr_{1} * \delta_{x})$ is $\eps$-dense in $X$.
\end{prop}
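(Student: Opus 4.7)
The plan is to introduce a functional measuring how far a closed set $T \subseteq X$ is from being $X$ itself, verify that it does not increase under the operation $T \mapsto \Phi_\mgr(T) := \bigcup_{g \in \supp(\mgr)} g(T)$, and then use compactness together with the \textbf{no deterministic images} condition to promote this to a uniform strict decrease as long as $T$ is not $\eps$-dense. Concretely, define
\[
F(T) := \int_X \dist(x, T) \dd \m(x)
\]
for closed $T \subseteq X$. A preliminary observation is that $\supp(\m) = X$: otherwise $Y := \supp(\m)$ would be a proper $G$-invariant closed set, and $(A, B) := (Y, Y)$ would be a deterministic image pair for every $\mgr \in K$. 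Consequently $F$ is continuous in the Hausdorff metric, bounded, and vanishes exactly at $T = X$.

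The non-increase $F(\Phi_\mgr(T)) \le F(T)$ for any $\mgr \in \mMG$ is direct: since $\Phi_\mgr(T) \supseteq g(T)$ for each $g \in \supp(\mgr)$, $\dist(x, \Phi_\mgr(T)) \le \dist(x, g(T))$ pointwise in $x$; integrating against $\mgr$ first, applying Fubini, and using that each $g$ preserves $\m$ yields $F(\Phi_\mgr(T)) \le \int_G F(g(T)) \dd \mgr(g) = F(T)$.

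The core step will be a uniform strict drop: there should exist $c = c(K, \eps) > 0$ such that $F(\Phi_\mgr(T)) \le F(T) - c$ whenever $\mgr \in K$ and $\dH(T, X) \ge \eps$. I will argue by contradiction and compactness. Take sequences $\mgr_n \in K$ and closed $T_n$ with $\dH(T_n, X) \ge \eps$ and $F(T_n) - F(\Phi_{\mgr_n}(T_n)) \to 0$; pass to subsequences so that $\mgr_n \to \mgr_\infty \in K$ weak-*, and $T_n \to T_\infty$, $\Phi_{\mgr_n}(T_n) \to S_\infty$ both in Hausdorff. Continuity of $F$ then gives $F(T_\infty) = F(S_\infty)$, and $T_\infty$ is proper because $\dH(T_\infty, X) \ge \eps$. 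Since weak-* convergence implies $\supp(\mgr_\infty) \subseteq \liminf_n \supp(\mgr_n)$, for each $g \in \supp(\mgr_\infty)$ one can pick $g_n \in \supp(\mgr_n)$ with $g_n \to g$ in $\dC$; as the $g_n$ are isometries and $T_n \to T_\infty$, we get $g_n(T_n) \to g(T_\infty)$ in Hausdorff, hence $g(T_\infty) \subseteq S_\infty$. Combining $\dist(\cdot, S_\infty) \le \dist(\cdot, g(T_\infty))$ pointwise with the $\m$-integral equality $F(S_\infty) = F(T_\infty) = F(g(T_\infty))$, the full-support property of $\m$ and continuity of both distance functions upgrade $\m$-a.e.\ equality to pointwise equality, so $S_\infty = g(T_\infty)$. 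Since this holds for every $g \in \supp(\mgr_\infty)$ and both $T_\infty, S_\infty$ are proper, we contradict the \textbf{no deterministic images} condition for $\mgr_\infty$. The main subtlety here is that $\mgr \mapsto \supp(\mgr)$ is only lower semi-continuous in the weak-* topology, so $S_\infty$ could in principle strictly contain $\Phi_{\mgr_\infty}(T_\infty)$; the argument is arranged so that only the $\liminf$-direction of the support convergence is needed.

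Once the uniform drop is established, the proposition follows by iteration. Setting $T_0 := \{x\}$ and $T_j := \Phi_{\mgr_j}(T_{j - 1})$, the quantity $F(T_0)$ is bounded above by some constant $M$ depending only on $X$ and $\m$; as long as $T_j$ is not $\eps$-dense, $F(T_{j + 1}) \le F(T_j) - c$, so after at most $\lceil M/c \rceil$ iterations some $T_{j^*}$ must be $\eps$-dense. Finally, $\eps$-density is preserved under further compositions: isometries map $\eps$-dense sets to $\eps$-dense sets, and $T_{j + 1} \supseteq g(T_j)$ for any $g \in \supp(\mgr_{j + 1})$. Hence $m := \lceil M/c \rceil + 1$ works uniformly in $\mgr_1, \ldots, \mgr_m \in K$ and $x \in X$.
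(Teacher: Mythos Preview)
Your argument is correct and genuinely different from the paper's. The paper works with \emph{complements}: it defines $T_{\mgr}(A) = \bigcap_{g \in \supp(\mgr)} g(A)$, introduces a bespoke pseudo-metric $\mH$ on the hyperspace $\mS$ together with a partial order $\cle$, proves a lower semi-continuity lemma for $\mH(T_{\mgr}(A), A)$, and then derives a uniform lower bound $\mH(T_{\mgr}(A), A) > r$ on the compact set $\mS_{\eps}$; the conclusion comes from observing that the orbit $\{T_{\mgr_m * \ldots * \mgr_1}(A)\}_m$ would be an infinite $r$-separated set in the compact hyperspace $\mS_{\eps}$. You instead take the direct (union) operator $\Phi_{\mgr}$ and build a Lyapunov functional $F(T) = \int_X \dist(x, T)\,\dd\m$, turning the problem into a monotone-decrease argument with a uniform gap obtained by compactness. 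Your route is shorter and avoids the machinery of $\mH$ and $\cle$, at the cost of bringing the invariant measure $\m$ into a step that the paper handles purely topologically; the justification that $\supp(\m) = X$ via the \textbf{no deterministic images} hypothesis fills this gap cleanly. Both proofs ultimately hinge on the same lower semi-continuity of $\mgr \mapsto \supp(\mgr)$ under weak-* convergence, which you isolate precisely where it is needed.
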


Let us postpone its proof until Section \ref{sec:eps-dense} and deduce Theorem \ref{thm:limitMeas}. The proof essentially repeats the one of Theorem 8 in \cite{M} and here we present a brief version of it for the convenience of the reader.

We would like to start with a following
\begin{defn} \label{def:epsWide}
    We say that a set $Q \in \mathcal{B}$ is $\eps$-wide if it is contained in a ball of radius $\eps$ and contains a ball of radius $\eps/3$.
\end{defn}

\begin{rem}
    Note that it follows directly from Vitali covering lemma \cite{Vit} that any compact metric space can be partitioned into finitely many $\eps$-wide sets for any fixed $\eps > 0$.
\end{rem}

We will need the following corollary of Proposition \ref{prop:dens}:

\begin{lm} \label{lm:right_form}
    For any $\eps > 0$ there exists $m \in \N$ and $\delta > 0$, such that for any sequence $\mgr_1, \mgr_2, \ldots, \mgr_m \in K$, any initial measure $\msp \in \mMsp$, and any $\eps$-wide set $Q$ we have
    \begin{equation} \label{mixingCond}
        [\mgr_{m} * \mgr_{m - 1} * \ldots * \mgr_{1} * \msp] (Q) \ge \delta.
    \end{equation}
\end{lm}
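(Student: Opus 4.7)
The plan is to deduce Lemma \ref{lm:right_form} from Proposition \ref{prop:dens} through a compactness and lower semi-continuity argument. First, I would apply Proposition \ref{prop:dens} with parameter $\eps/3$ to obtain an integer $m$ such that for every $\mgr_1, \ldots, \mgr_m \in K$ and every $x \in X$ the support of $\tmsp_{x, \underline{\mgr}} := \mgr_m * \cdots * \mgr_1 * \delta_x$ is $\eps/3$-dense in $X$. Since convolution is affine in the initial measure, the decomposition $\msp = \int \delta_x \dd \msp(x)$ yields
\[
\bigl[\mgr_m * \cdots * \mgr_1 * \msp\bigr](Q) = \int_X \tmsp_{x, \underline{\mgr}}(Q) \dd \msp(x),
\]
so it suffices to lower-bound $\tmsp_{x, \underline{\mgr}}(Q)$ uniformly in $x \in X$, $\underline{\mgr} \in K^m$ and $\eps$-wide $Q$. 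Since any such $Q$ contains an open ball $\ball(y, \eps/3)$ by definition of $\eps$-wide, it is enough to lower-bound $\tmsp_{x, \underline{\mgr}}(\ball(y, \eps/3))$ uniformly in the additional parameter $y \in X$.

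To this end I would introduce
\[
h : K^m \times X \times X \to \R_{\ge 0}, \quad h(\underline{\mgr}, x, y) = \tmsp_{x, \underline{\mgr}}\bigl(\ball(y, \eps/3)\bigr),
\]
and show that $h$ is strictly positive and lower semi-continuous. Positivity is immediate from the $\eps/3$-density of the support: the open ball $\ball(y, \eps/3)$ must meet $\supp(\tmsp_{x, \underline{\mgr}})$, and every open neighborhood of a support point carries positive mass. The domain $K^m \times X \times X$ is compact, since $K$ is weak-* compact and $X$ is compact, so a positive lower semi-continuous function on it attains a positive minimum $\delta > 0$, which is the constant demanded by the lemma.

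The core of the argument is therefore the verification that $h$ is lower semi-continuous. This rests on two ingredients. The first is joint weak-* continuity of the convolution map $(\mgr, \msp) \mapsto \mgr * \msp$, which one obtains by testing against $\varphi \in C(X, \R)$ and reducing to continuity of $(g, x) \mapsto \varphi(gx)$ on the compact product $G \times X$; this continuity propagates inductively to show that $(\underline{\mgr}, x) \mapsto \tmsp_{x, \underline{\mgr}}$ is continuous from $K^m \times X$ into $\mMsp$. The second is the Portmanteau inequality $\liminf_n \eta_n(U) \ge \eta(U)$ for open $U$ whenever $\eta_n \to \eta$ weakly. To accommodate a moving ball center $y_n \to y$, I would use the inclusion $\ball(y, \eps/3 - \tau) \subset \ball(y_n, \eps/3)$ valid for large $n$, apply the Portmanteau inequality to $U = \ball(y, \eps/3 - \tau)$, and let $\tau \to 0^+$ using continuity of the measure along the nested union $\bigcup_{\tau > 0} \ball(y, \eps/3 - \tau) = \ball(y, \eps/3)$. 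The main obstacle is exactly this simultaneous handling of varying convolution parameters and a moving ball center; once the LSC claim is in place, the rest is a routine compactness conclusion.
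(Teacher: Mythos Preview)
Your proposal is correct and follows essentially the same compactness-from-Proposition~\ref{prop:dens} route as the paper. The paper packages the argument as a proof by contradiction---extracting convergent subsequences of $(\mgr_j^{(n)})_j$, of the initial points $x_n$, and of the inscribed $\eps/3$-balls of the sets $Q_n$, and then using the Portmanteau lower bound on a fixed open ball to contradict $\eps/8$-density---whereas you phrase the same idea directly as lower semi-continuity of $(\underline{\mgr},x,y)\mapsto \tmsp_{x,\underline{\mgr}}(\ball(y,\eps/3))$ on the compact domain $K^m\times X\times X$; the two are equivalent formulations of the same compactness step.
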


\begin{proof}
    Assume that is not true and fix $\eps > 0$ for which the statement fails. By Proposition \ref{prop:dens} there exists $m \in \N$, such that for any sequence $\mgr_1, \mgr_2, \ldots, \mgr_m \in K$ and any $x \in X$ the support $\supp(\mgr_m * \mgr_{m - 1} * \ldots * \mgr_{1} * \delta_{x})$ is $\eps/8$-dense in $X$. Let us fix that $m$. Since we assume that Lemma \ref{lm:right_form} fails, for any $n \in \N$ there exists a sequence $\mgr_{1}^{(n)}, \mgr_{2}^{(n)}, \ldots, \mgr_{m}^{(n)} \in K$, a point $x_n \in X$, and an $\eps$-wide set $Q_n$, such that
    \begin{equation*}
        [\mgr_{m}^{(n)} * \mgr_{m - 1}^{(n)} * \ldots * \mgr_{1}^{(n)} * \delta_{x_n}] (Q_n) \to_{n \to \infty} 0.
    \end{equation*}
    Due to compactness of $X$ we can choose an infinite subsequence $Q_{n_k}$, such that all $Q_{n_k}$ cover the same ball $B$ of radius $\eps/4$ (see Definition \ref{def:epsWide}). After that we can choose a convergent subsequence of $x_{n_k} \to x$ and then pass to a subsequence $m$ more times, so that $\mgr_{j}^{(n)} \to_{n \to \infty} \mgr_j$ for $1 \le j \le m$. Now we see that for some $\mgr_1, \mgr_2, \ldots, \mgr_m \in K$ we have
    \begin{equation*}
        [\mgr_{m} * \mgr_{m - 1} * \ldots * \mgr_{1} * \delta_{x}] (B) = 0,
    \end{equation*}
    which contradicts the fact that $\supp(\mgr_{m} * \mgr_{m - 1} * \ldots * \mgr_{1} * \delta_x)$ is $\eps/8$-dense.
\end{proof}

Before we start the proof of Theorem \ref{thm:limitMeas} let us establish the following
\begin{lm} \label{lm:semiinv}
    For any $\mgr \in \mMG$ and $\msp \in \mMsp$ one has
    \begin{equation} \label{shift_mon}
        W(\mgr * \msp, \m) \le W(\msp, \m).
    \end{equation}
\end{lm}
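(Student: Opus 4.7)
The plan is to construct an explicit coupling of $\mgr * \msp$ and $\m$ whose transport cost is exactly $W(\msp, \m)$, and then read (\ref{shift_mon}) off from the defining infimum of the Wasserstein distance. The mechanism is that the invariance of $\m$ under every $g \in G$, combined with the isometry property $d(g(y), g(z)) = d(y, z)$, lets one push any coupling of $\msp$ and $\m$ through a random isometry without changing the cost and while keeping the second marginal equal to $\m$.

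Concretely, first I would fix an optimal coupling $\gamma$ between $\msp$ and $\m$; such a $\gamma$ exists because $X$ is compact and $d$ is continuous, so the infimum in Definition \ref{def:Wass} is attained by a standard weak-* compactness argument on the set of admissible couplings. Next, I would define the candidate coupling $\tilde{\gamma}$ on $X \times X$ as the pushforward of the product measure $\mgr \otimes \gamma$ on $G \times X \times X$ under the jointly continuous map $(g, y, z) \mapsto (g(y), g(z))$. Its first marginal is the pushforward of $\mgr \otimes \msp$ under $(g, y) \mapsto g(y)$, which is exactly $\mgr * \msp$ by the definition of convolution. Its second marginal is the pushforward of $\mgr \otimes \m$ under $(g, z) \mapsto g(z)$, which equals $\int_G g_* \m \dd \mgr(g) = \m$ because every $g \in G$ preserves $\m$. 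Hence $\tilde{\gamma}$ is an admissible coupling of $\mgr * \msp$ and $\m$.

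Finally, using that every $g \in G$ is an isometry of $(X, d)$, the transport cost of $\tilde{\gamma}$ is
\begin{align*}
\iint_{X \times X} d(x_1, x_2) \dd \tilde{\gamma}(x_1, x_2) &= \int_G \iint_{X \times X} d(g(y), g(z)) \dd \gamma(y, z) \dd \mgr(g) \\
&= \iint_{X \times X} d(y, z) \dd \gamma(y, z) = W(\msp, \m),
\end{align*}
so taking the infimum over admissible couplings on the left-hand side yields (\ref{shift_mon}). There is no substantive obstacle here: the only points that require attention are the measurability of the pushforward and the interchange of integrals, both of which are routine given joint continuity of the action $G \times X \to X$ and Fubini's theorem. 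The argument also clarifies a more general principle — convolution with any measure supported in the group of $\m$-preserving isometries is nonexpansive toward $\m$ in the Wasserstein metric, which will be used repeatedly in the sequel.
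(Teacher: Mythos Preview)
Your proof is correct and follows essentially the same approach as the paper: both construct the coupling of $\mgr*\msp$ and $\m$ by pushing a coupling of $\msp$ and $\m$ through a random isometry drawn from $\mgr$, then use invariance of $\m$ for the marginals and the isometry property to see the cost is unchanged. The only cosmetic difference is that you fix an optimal $\gamma$ at the outset, whereas the paper works with an arbitrary admissible $\gamma$ and observes the costs coincide.
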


\begin{proof}[Proof of Lemma \ref{lm:semiinv}]
    Let $\gamma$ be a probability measure on $X \times X$, such that $P_x(\gamma) = \msp$ and $P_y(\gamma) = \m$. Define a measure
    \begin{equation*}
        \dd \tilde{\gamma} (x, y) = \int_{G} \dd\gamma(g(x), g(y)) \dd\mgr(g).
    \end{equation*}
    It is clear that $P_x(\tilde{\gamma}) = \mgr * \msp$ and $P_y(\tilde{\gamma}) = \m$. Now
    \begin{multline*}
        \iint_{X \times X} d(x, y) \dd\tilde{\gamma} (x, y) = \\
        = \iiint_{X \times X \times G} d(x, y) \dd\gamma(g(x), g(y)) \dd\mgr(g) =\\
        = \iiint_{X \times X \times G} d(g^{-1}(x), g^{-1}(y)) \dd\gamma(x, y) \dd\mgr(g) =\\
        = \iiint_{X \times X \times G} d(x, y) \dd\gamma(x, y) \dd\mgr(g) = \\
        = \iint_{X \times X} d(x, y) \dd\gamma (x, y)
    \end{multline*}
    and inequality (\ref{shift_mon}) follows.
\end{proof}

Now we are ready to prove Theorem \ref{thm:limitMeas}.
\begin{proof} [Proof of Theorem \ref{thm:limitMeas}] 

    Let us fix $\eps > 0$ and $\msp \in \mMsp$. We will first show that for $m \in \N$ and $\delta$ satisfying (\ref{mixingCond}) the measure $\msp^{(m)} := \mgr_{m} * \mgr_{m - 1} * \ldots * \mgr_{1} * \msp$ can be decomposed into a sum
    \begin{equation*}
        \msp^{(m)} 
        = \msp_0 + \msp_1,
    \end{equation*}
    where $\msp_1(X) = \delta$ and
    \begin{equation} \label{msp2dist}
        W\left( \frac{\msp_1}{\msp_1(X)}, \m \right) \le \eps.
    \end{equation}
    In order to construct that decomposition we fix a partition of $X$ into $\eps$-wide sets $\{Q_j\}_{1 \le j \le k}$. Next, define a measure
    \begin{equation*}
        \dd \nu_1 (x) = \sum_{j = 1}^{k} \frac{\delta \m(Q_j)}{\msp^{(m)}(Q_j)} \1_{Q_j} (x) \dd \msp^{(m)} (x).
    \end{equation*}
    Notice that according to (\ref{mixingCond}) we have $\frac{\delta \m(Q_j)}{\msp^{(m)}(Q_j)} < \frac{\delta}{\msp^{(m)} (Q_j)} \le 1$, so $\msp_0 = \msp^{(m)} - \msp_1$ is a well-defined positive measure and $\msp_1(Q_j) = \delta \m(Q_j)$. Clearly, $\msp_1(X) = \delta$. Now let us show that (\ref{msp2dist}) holds. We would like to construct a probability measure $\gamma$ on $G \times G$, such that
    \begin{equation} \label{gammaSupp}
        \supp(\gamma)~\subset~\cup_{j = 1}^{k} Q_j \times Q_j,
    \end{equation}
    $P_x(\gamma) = \frac{\msp_1}{\delta}$, and $P_y(\gamma) = \m$. It immediately follows from (\ref{gammaSupp}) that
    \begin{multline*}
        \iint_{X \times X} d(x, y) \dd\gamma(x, y) = \sum_{j = 1}^{k} \iint_{Q_j \times Q_j} d(x, y) \dd\gamma(x, y) \le \\
        \le \eps \sum_{j = 1}^{k} \gamma(Q_j \times Q_j) = \eps.
    \end{multline*}
    The construction of required $\gamma$ can be done in the following way: for each $Q_j$ define
    \begin{equation*}
        \gamma|_{Q_j \times Q_j} = \dfrac{\frac{\msp_1|_{Q_j}}{\delta} \otimes \m|_{Q_j}}{\m(Q_j)}.
    \end{equation*}
    It's easy to verify that $\gamma$ satisfies required properties and hence (\ref{msp2dist}) holds.

    Now we can renormalize the measure $\msp_0$ and apply the same procedure $r > \log_{1 - \delta} (\eps)$ times. We will obtain a decomposition
    \begin{equation*}
        \msp^{(r)} = \msp_0 + \msp_1 + \ldots + \msp_{r},
    \end{equation*}
    (with some new measures $\msp_0$ and $\msp_1$), such that
    \begin{equation} \label{mspjdist}
        W \left( \frac{\msp_j}{\msp_j(X)}, \m \right) = W\left( \frac{\msp_j}{(1 - \delta)^{j - 1} \delta}, \m \right) \le \eps \quad \text{for any $1 \le j \le r$}
    \end{equation}
    and $\msp_0(X) = (1 - \delta)^r < \eps$ (notice that inequality (\ref{mspjdist}) holds since additional iterations don't increase the distance to measure $\m$ according to Lemma \ref{lm:semiinv}). Finally, observe that
    \begin{multline*}
        W \left( \sum_{j = 0}^{r} \msp_j, \m \right) \le \msp_0(X) W \left( \frac{\msp_0}{\msp_0(X)}, \m \right) + \sum_{j = 1}^{r} \msp_j(X) W \left( \frac{\msp_j}{\msp_j(X)}, \m \right) \le \\
        \le \Diam(X) \eps + \eps = (\Diam(X) + 1) \eps
    \end{multline*}
    and since $\eps$ was chosen an arbitrary positive constant Theorem \ref{thm:limitMeas} is proven.

\end{proof}

\section{Proof of Proposition \ref{prop:dens}: $\eps$-density of the support} \label{sec:eps-dense}

Before proving Proposition \ref{prop:dens} we will carry out some necessary preparations. Let us start by proving the following topological lemma:

\begin{lm} \label{lm:immers}
    If $A \subset X$ is closed and $g(A) \subseteq A$ for some $g \in G$ then $g(A) = A$.
\end{lm}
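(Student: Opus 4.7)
The plan is to use the compactness of $G$ (in the $C^0$ topology) to find arbitrarily large iterates of $g$ that come close to the identity, and then exploit that $g$ is contracting $A$ into itself to squeeze the inclusion in the reverse direction.

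First I would set $A_n := g^n(A)$ for $n \ge 0$. Since $g$ is a bijective isometry and $g(A) \subseteq A$, applying $g^{n-1}$ to both sides gives $A_n \subseteq A_{n-1}$, so we obtain a decreasing sequence of nonempty closed sets all contained in $A_1 = g(A)$.

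Next I would use that $G$ is $\dC$-compact. The sequence $\{g^n\}_{n \in \N} \subset G$ has a $\dC$-convergent subsequence $g^{n_k} \to h$ for some $h \in G$. Then setting $m_k := n_{k+1} - n_k$, which we can arrange to be strictly positive by passing to a further subsequence, we get
\begin{equation*}
    \dC(g^{m_k}, \id) = \dC(g^{n_{k+1}}, g^{n_k}) \to 0,
\end{equation*}
using that left multiplication by $(g^{n_k})^{-1}$ is an isometry of $(G, \dC)$ (or equivalently, using the identity $\dC(g^{n_{k+1}}, g^{n_k}) = \max_{x \in X} d(g^{m_k}(g^{n_k} x), g^{n_k} x) = \dC(g^{m_k}, \id)$ since $g^{n_k}$ is a bijection of $X$). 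In particular, $g^{m_k} \to \id$ uniformly on $X$.

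Then for each $m_k \ge 1$ we have $A_{m_k} = g^{m_k}(A) \subseteq A_1 = g(A)$, while uniform convergence $g^{m_k} \to \id$ together with compactness of $A$ yields $g^{m_k}(A) \to A$ in Hausdorff distance. Since $g(A)$ is closed (image of the compact set $A$ under the continuous map $g$), the Hausdorff limit $A$ must lie in $g(A)$, i.e.\ $A \subseteq g(A)$. Combined with the hypothesis $g(A) \subseteq A$ this gives $g(A) = A$.

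I do not anticipate a serious obstacle here; the only thing to be careful about is justifying that the limit of a Hausdorff-convergent sequence of subsets of the closed set $g(A)$ is itself contained in $g(A)$, and arranging that the differences $m_k = n_{k+1} - n_k$ are positive (which is automatic after discarding a single term of the subsequence).
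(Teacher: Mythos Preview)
Your argument is correct, but it takes a different route from the paper's. The paper does not invoke the compactness of $G$ at all: it forms the nested intersection $B=\bigcap_{m\ge 1} g^m(A)$, observes that $B$ is $g$-invariant, and then for any $x\in A\setminus B$ uses that $g$ is an isometry to get $d(x,B)=d(g^m(x),g^m(B))=d(g^m(x),B)\to 0$ (the last limit because the decreasing compacta $g^m(A)$ Hausdorff-converge to $B$), contradicting $x\notin B$. Your approach instead exploits the $\dC$-compactness of $G$ to produce iterates $g^{m_k}\to\id$, and then reads off $A\subseteq g(A)$ from Hausdorff convergence of $g^{m_k}(A)\subseteq g(A)$ to $A$. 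The paper's proof is slightly more self-contained in that it uses only that $g$ is an isometry of the compact space $X$ (no ambient group needed), while yours leans on the group structure; on the other hand, your recurrence-to-identity idea is a clean and reusable device in this setting, and the compactness of $G$ is already established earlier in the paper, so there is no real cost.
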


\begin{proof}
    Consider
    \begin{equation*}
        B = \bigcap_{m \in \N} g^m(A).
    \end{equation*}
    B is a closed $g$-invariant subset of $A$. Assume there exists $x \in A \setminus B$. Since $g$ is an isometry, we have
    \begin{equation*}
        \inf_{y \in B} d(x, y) = \inf_{y \in B} d(g^m(x), g^m(y)) = \inf_{y \in B} d(g^m(x), y)
    \end{equation*}
    for any $m \in \N$. Also, by the construction of $B$, we have
    \begin{equation*}
        \lim_{m \to \infty} \inf_{y \in B} d(g^m(x), y) = 0,
    \end{equation*}
    so $\inf_{y \in B} d(x, y) = 0$, which cannot be, since $B$ is closed and $x \notin B$. Hence $A = B$.
\end{proof}

In what follows we will be working with the following object: let $\mS$ be a set of all closed subsets of $X$ and define $\dH$ -- the Hausdorff distance on $\mS$. Let us remind the definition of $\dH$ and introduce useful notation along the way. We will denote by $D$ the following asymmetric function on $\mS \times \mS$:
\begin{equation*}
    D(A, B) = \inf \{ \eps > 0 | B \subset \ball (A, \eps) \} \quad \text{for $A, B \in \mS$.}
\end{equation*}
Recall that the Hausdorff distance on $\mS$ can be defined by the following formula:
\begin{equation} \label{HausdDef}
    \dH (A, B) = \max \{ D (A, B), D (B, A) \} \quad \text{for $A, B \in \mS$.}
\end{equation}

For a point $x \in X$ and a closed set $A \subset X$ we will denote by $d(x, A)$ the distance between them defined by the following formula:
\begin{equation*}
    d(x, A) = \inf_{y \in A} d(x, y) = \min_{y \in A} d(x, y).
\end{equation*}

The following lemma will be used later:
\begin{lm} \label{lm:neighbOfInt}
    Let $\mA \subset \mS$ be an arbitrary collection of closed subsets of $X$. Then for any $\eps > 0$ there exists $\delta > 0$, such that
    \begin{equation*}
        \bigcap_{A \in \mA} \ball (A, \delta) \subset \ball \left( \left[ \bigcap_{A \in \mA} A \right], \eps \right)
    \end{equation*}
\end{lm}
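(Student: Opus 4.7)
The plan is to argue by contradiction, exploiting compactness of $X$ to extract a limit point that lies in every $A \in \mA$ (hence in the intersection) yet is bounded away from the intersection.

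First I would suppose the conclusion fails for some $\eps > 0$. Then for every $n \in \N$ there exists a point $x_n \in X$ satisfying $x_n \in \ball(A, 1/n)$ for every $A \in \mA$, but $x_n \notin \ball\left(\bigcap_{A \in \mA} A, \eps\right)$. Using the convention $d(x, \emptyset) = +\infty$, the second condition reads $d\left(x_n, \bigcap_{A \in \mA} A\right) \ge \eps$.

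Next, since $X$ is compact, I pass to a convergent subsequence $x_{n_k} \to x_*$. For any fixed $A \in \mA$, the inequality $d(x_{n_k}, A) < 1/n_k \to 0$ together with closedness of $A$ forces $x_* \in A$. Since this holds for every $A \in \mA$, we conclude $x_* \in \bigcap_{A \in \mA} A$; in particular, the intersection is nonempty. On the other hand, the function $y \mapsto d\left(y, \bigcap_{A \in \mA} A\right)$ is $1$-Lipschitz, so passing to the limit in $d(x_{n_k}, \bigcap_{A \in \mA} A) \ge \eps$ yields $d\left(x_*, \bigcap_{A \in \mA} A\right) \ge \eps > 0$, contradicting $x_* \in \bigcap_{A \in \mA} A$.

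I do not expect any serious obstacle here; the only subtlety is handling the case when $\bigcap_{A \in \mA} A = \emptyset$, which the contradiction argument absorbs automatically: the convergent subsequence produces a point in the intersection and thereby rules this case out whenever the premise fails. The proof uses nothing beyond sequential compactness of $X$ and closedness of each $A$.
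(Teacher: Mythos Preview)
Your proof is correct and follows essentially the same approach as the paper's own proof: both argue by contradiction, take $x_n$ in every $\ball(A,1/n)$ but outside $\ball(\bigcap A,\eps)$, extract a convergent subsequence by compactness of $X$, and observe that the limit lies in every $A$ yet has distance at least $\eps$ from the intersection. Your treatment of the empty-intersection case is a small clarification not made explicit in the paper, but otherwise the arguments coincide.
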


\begin{proof}[Proof of Lemma \ref{lm:neighbOfInt}]
    Assume the contrary. Then there exists a sequence of points $x_n \in X$, such that for any $A \in \mA$ we have
    \begin{equation} \label{xnclose}
        d(x_n, A) \to 0 \quad \text{as $n \to \infty$}
    \end{equation}
    and
    \begin{equation} \label{xnfar}
        d \left( x_n, \bigcap_{A \in \mA} A \right) \ge \eps.
    \end{equation}
    Since $X$ is compact we can choose a subsequence $x_{n_k} \to_{k \to \infty} x$. Then (\ref{xnclose}) implies that for any $A \in \mA$ we have
    \begin{equation*}
        d(x, A) = 0,
    \end{equation*}
    hence $x \in A$. At the same time (\ref{xnfar}) gives us
    \begin{equation*}
        d \left( x_n, \bigcap_{A \in \mA} A \right) \ge \eps,
    \end{equation*}
    which brings us to a contradiction and the proof is finished.
\end{proof}

Next, we would like to define the following pseudo-metric on $\mS$
\begin{equation*}
    \mH (A, B) = \max \{ \inf_{g \in G} D (g(A), B), \inf_{h \in G} D (h(B), A) \} \quad \text{for $A, B \in \mS$.}
\end{equation*}

Formula (\ref{HausdDef}) implies that
\begin{equation} \label{HHausdIneq}
    \dH(A, B) \ge \mH(A, B).
\end{equation}
Notice also that in the definition of $\mH$ infima can be replaced by minima because $G$ is compact with respect to $\dC$:
\begin{equation} \label{minFromComp}
    \mH (A, B) = \max \{ \min_{g \in G} D (g(A), B), \min_{h \in G} D (h(B), A) \} \quad \text{for $A, B \in \mS$}.
\end{equation}
We will need the following technical
\begin{lm} \label{lm:triangle}
    $\mH$ satisfies the triangle inequality, namely, for any $A, B, C \in \mS$
    \begin{equation} \label{triang}
        \mH(A, C) \le \mH(A, B) + \mH(B, C).
    \end{equation}
    Additionally, $\mH$ is a continuous function from $\mS \times \mS$ (where the topology on $\mS$ is defined by Hausdorff distance $\dH$) to $\R$.
\end{lm}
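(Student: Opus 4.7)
The plan is to first establish the triangle inequality by exploiting the group structure, namely that the composition of two near-optimal isometries again belongs to $G$, and then deduce continuity as a direct corollary by combining this inequality with the bound $\mH \le \dH$ from (\ref{HHausdIneq}).

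For the triangle inequality, fix $A, B, C \in \mS$. Using (\ref{minFromComp}), which is justified by compactness of $G$ under $\dC$, pick $g^* \in G$ attaining $D(g^*(A), B) = \min_{g \in G} D(g(A), B) =: \eps_1$ and $h^* \in G$ attaining $D(h^*(B), C) = \min_{h \in G} D(h(B), C) =: \eps_2$. For every $\delta > 0$ we have $B \subset \ball(g^*(A), \eps_1 + \delta)$; since $h^*$ is an isometry, applying it preserves this inclusion, so $h^*(B) \subset \ball(h^*g^*(A), \eps_1 + \delta)$. Combined with $C \subset \ball(h^*(B), \eps_2 + \delta)$ this yields $C \subset \ball(h^*g^*(A), \eps_1 + \eps_2 + 2\delta)$. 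As $h^*g^* \in G$, letting $\delta \to 0$ gives
\[
\min_{g \in G} D(g(A), C) \le D(h^*g^*(A), C) \le \eps_1 + \eps_2 \le \mH(A, B) + \mH(B, C).
\]
The symmetric chain $C \to B \to A$ produces the analogous bound on $\min_{h \in G} D(h(C), A)$, and taking the maximum proves (\ref{triang}).

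Continuity is then immediate: by (\ref{HHausdIneq}) and the triangle inequality just proved, for any $A', B' \in \mS$,
\[
\mH(A', B') \le \mH(A', A) + \mH(A, B) + \mH(B, B') \le \dH(A', A) + \mH(A, B) + \dH(B, B'),
\]
and the symmetric bound with primes and unprimes swapped yields $|\mH(A', B') - \mH(A, B)| \le \dH(A', A) + \dH(B', B)$, i.e.\ $\mH$ is $1$-Lipschitz in each argument with respect to $\dH$, hence jointly continuous on $\mS \times \mS$. The only mildly delicate point is the attainment of the infima defining $\mH$, so that the product $h^*g^*$ genuinely lies in $G$; this is precisely what (\ref{minFromComp}) provides via compactness of $G$ under $\dC$. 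Beyond that, no substantial obstacle is anticipated.
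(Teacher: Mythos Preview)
Your argument is correct and follows essentially the same route as the paper: you pick the minimizing isometries (your $g^*, h^*$ are the paper's $g_1, g_2$), compose them, and use that $D$ satisfies a one-sided triangle inequality together with isometry invariance to bound each of the two terms in $\mH(A,C)$; the continuity is then derived in both cases from the triangle inequality combined with (\ref{HHausdIneq}). The only cosmetic difference is that you unpack the bound $D(h^*g^*(A),C)\le D(g^*(A),B)+D(h^*(B),C)$ via ball inclusions with an auxiliary $\delta$, whereas the paper invokes the triangle inequality for $D$ directly, and you are slightly more explicit in the Lipschitz computation for continuity.
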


\begin{proof}
    We will start by proving the triangle inequality. According to formula (\ref{minFromComp}) there exist $g_1, h_1, g_2, h_2 \in G$ such that
    \begin{equation*}
        \mH (A, B) = \max \{ D (g_1(A), B), D (h_1(B), A) \}
    \end{equation*}
    and
    \begin{equation*}
        \mH (B, C) = \max \{ D (g_2(B), C), D (h_2(C), B) \}.
    \end{equation*}
    Since $g_2$ is an isometry we have
    \begin{multline} \label{triang1}
        D(g_2 g_1 (A), C) \le D(g_2 g_1 (A), g_2(B)) + D(g_2 (B), C) = \\
        = D(g_1 (A), B) + D(g_2 (B), C)
    \end{multline}
    and similarly since $h_1$ is an isometry
    \begin{multline} \label{triang2}
        D(h_1 h_2 (C), A) \le D(h_1 h_2 (C), h_1(B)) + D(h_1 (B), A) = \\
        = D(h_2 (C), B) + D(h_1 (B), A).
    \end{multline}
    Taking $\max$ of both sides of inequalities (\ref{triang1}) and (\ref{triang2}) we conclude that
    \begin{multline*}
        \max \{ D (g_2 g_1(A), C), D (h_1 h_2 (C), A) \} \le \\
        \le \max \{ D(g_1 (A), B) + D(g_2 (B), C), D(h_1 (B), A) + D(h_2 (C), B) \} \le \\
        \le \mH(A, B) + \mH(B, C),
    \end{multline*}
    which proves inequality (\ref{triang}).

    The continuity follows from the triangle inequality (\ref{triang}) and the estimate (\ref{HHausdIneq}) of $\mH$ by $\dH$.
\end{proof}

Let us also introduce the following partial order on $\mS$:
\begin{equation*}
    A \cle B \quad \text{if there exists $g \in G$, such that $g(A) \subseteq B$.}
\end{equation*}
We will need the following monotonicity result:
\begin{lm} \label{lm:mon}
    Consider $A, B, C \in \mS$, such that $A \cle B \cle C$. Then
    \begin{equation*}
        \mH(A, B) \le \mH(A, C).
    \end{equation*}
\end{lm}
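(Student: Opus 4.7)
The plan is to unwind $\mH(A,B)$ and $\mH(A,C)$ via their one-sided definitions and exploit the fact that each of the relations $A\cle B$ and $B\cle C$ comes packaged with an explicit witness in $G$. Set $\alpha := \mH(A,C)$. By (\ref{minFromComp}) I can fix $g,h \in G$ realizing the two minima in $\mH(A,C)$, so that $C \subseteq \ball(g(A),\alpha)$ and $A \subseteq \ball(h(C),\alpha)$. The task is then to produce $g',h' \in G$ with $D(g'(A),B) \le \alpha$ and $D(h'(B),A) \le \alpha$.

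The first of these bounds in fact costs nothing and uses only $A \cle B$: picking $g_1 \in G$ with $g_1(A) \subseteq B$ and applying the isometry $g_1^{-1}$, I get $A \subseteq g_1^{-1}(B)$, whence $D(g_1^{-1}(B),A) = 0$. So $\min_{h'} D(h'(B),A) = 0 \le \alpha$, independently of $C$.

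For the other bound I use $B \cle C$ together with the already chosen $g$. Pick $h_1 \in G$ with $h_1(B) \subseteq C$, equivalently $B \subseteq h_1^{-1}(C)$. Since $h_1^{-1}$ is an isometry it carries $\alpha$-neighborhoods to $\alpha$-neighborhoods, so
\[
B \subseteq h_1^{-1}(C) \subseteq h_1^{-1}\bigl(\ball(g(A),\alpha)\bigr) = \ball\bigl(h_1^{-1}g(A),\alpha\bigr),
\]
which gives $D\bigl((h_1^{-1}g)(A),B\bigr) \le \alpha$. Taking the max of the two bounds yields $\mH(A,B) \le \alpha = \mH(A,C)$.

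The only step requiring any thought is the transport argument in the third paragraph: it is here that one uses that elements of $G$ are isometries (rather than merely Lipschitz), and it is also the reason the analogous statement for $\dH$ in place of $\mH$ would need no argument at all—the monotonicity of $\mH$ along $\cle$ is nontrivial precisely because $\mH$ is taken after optimizing over the $G$-action, so the chain $A \cle B \cle C$ must be realized by coherent isometries.
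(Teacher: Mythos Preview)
Your proof is correct and is precisely the explicit unpacking of the definitions that the paper omits (the paper's own proof reads in full: ``Follows from the definition of $\mH$ and $\cle$''). The key observations you isolate---that $A\cle B$ forces $\min_{h'}D(h'(B),A)=0$, and that $B\cle C$ lets you pull back the witness $g$ for $D(g(A),C)\le\alpha$ through the isometry $h_1^{-1}$ to bound $D((h_1^{-1}g)(A),B)$---are exactly what makes the lemma go.
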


\begin{proof}
    Follows from the definition of $\mH$ and $\cle$.
\end{proof}

In what follows we will use the following technical result about Wasserstein distance (on $\mMG$):

\begin{lm} \label{lm:clSupp}
    For any fixed measure $\mgr \in \mMG$ and $\eps > 0$ there exists $\delta > 0$, such that for any $\tilde{\mgr}$, such that $W(\mgr, \tilde{\mgr}) < \delta$, we have $\supp(\mgr) \subset B(\supp(\tilde{\mgr}), \eps)$.
\end{lm}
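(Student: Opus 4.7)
The plan is to show the contrapositive via a covering/transport argument: if $W(\mgr, \tmgr)$ is small, then enough mass of $\mgr$ must be matched to nearby mass of $\tmgr$, forcing $\supp(\tmgr)$ to come close to every point of $\supp(\mgr)$.

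First I would use compactness of $G$ (hence of $\supp(\mgr)$) to cover $\supp(\mgr)$ by finitely many open balls $\ball(g_1, \eps/4), \ldots, \ball(g_N, \eps/4)$ with centers $g_i \in \supp(\mgr)$. By definition of support each of these balls has positive $\mgr$-measure, so we can set
\begin{equation*}
    m := \min_{1 \le i \le N} \mgr(\ball(g_i, \eps/4)) > 0.
\end{equation*}
Next, the claim is that $\delta := m\eps/8$ works. To see this, fix $\tmgr$ with $W(\mgr, \tmgr) < \delta$ and pick a near-optimal coupling $\gamma$ on $G \times G$ with marginals $\mgr$ and $\tmgr$ whose transport cost is less than $\delta$ plus an arbitrarily small $\eta$. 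By Markov's inequality,
\begin{equation*}
    \gamma\bigl(\{(g, h) : \dC(g, h) \ge \eps/4\}\bigr) \le \frac{4(\delta + \eta)}{\eps}.
\end{equation*}

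Now I would estimate, for each $i$, the $\gamma$-mass of pairs $(g, h)$ with $g \in \ball(g_i, \eps/4)$ and $\dC(g,h) < \eps/4$; subtracting the bad set from $\mgr(\ball(g_i, \eps/4)) \ge m$ gives at least $m - 4(\delta + \eta)/\eps$. For any such pair, $h \in \ball(g_i, \eps/2)$, so projecting on the second coordinate yields $\tmgr(\ball(g_i, \eps/2)) \ge m - 4(\delta + \eta)/\eps$. Letting $\eta \to 0$ and using $\delta = m\eps/8$ makes the right-hand side strictly positive, so the open ball $\ball(g_i, \eps/2)$ must meet $\supp(\tmgr)$. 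Finally, any $g \in \supp(\mgr)$ lies in some $\ball(g_i, \eps/4)$, and a point of $\supp(\tmgr)$ within $\eps/2$ of $g_i$ is within $3\eps/4 < \eps$ of $g$, giving $\supp(\mgr) \subset \ball(\supp(\tmgr), \eps)$.

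The only mild subtlety is that the infimum in the definition of $W$ need not be attained, so the argument uses an $\eta$-approximate coupling and then takes $\eta \to 0$; this is the main (but routine) technical point. Everything else reduces to compactness plus Markov's inequality.
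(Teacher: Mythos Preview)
Your proof is correct. The paper argues the same lemma by contradiction: assuming no $\delta$ works, it picks for each $\delta>0$ a point $x_\delta \in \supp(\mgr) \setminus \ball(\supp(\tmgr_\delta), \eps)$, bounds $\mgr(\ball(x_\delta, \eps/2))$ by $2\delta/\eps$ via the transport cost, and then passes to a subsequential limit $x_0 \in \supp(\mgr)$ with $\mgr(\ball(x_0, \eps/2)) = 0$, contradicting the definition of support. Both arguments rest on the same transport-cost inequality (mass of $\mgr$ near a point, times the distance it must travel to reach $\supp(\tmgr)$, is a lower bound for $W$), but you package it differently: a finite cover and Markov's inequality in place of the contradiction-plus-compactness limit. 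Your route is direct and constructive, producing the explicit $\delta = m\eps/8$, whereas the paper's is slightly shorter but gives no quantitative $\delta$. As a minor side remark, on a compact metric space the infimum defining $W$ is actually attained (tightness plus lower semicontinuity of the cost), so the $\eta$-approximation and the limit $\eta \to 0$ are unnecessary, though harmless.
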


\begin{proof}[Proof of Lemma \ref{lm:clSupp}]
    Assume that such $\delta$ does not exist. Then for any positive $\delta > 0$ there is a measure $\tilde{\mgr}_{\delta}$ and a point $x_{\delta}$, such that $x_{\delta} \in \supp(\mgr)$ but $x_{\delta} \notin \supp(\tilde{\mgr}) + B(0, \eps)$. Then $B(x_{\delta}, \eps / 2) \cap \supp(\tilde{\mgr}) = \varnothing$ and hence $\mgr(B(x_{\delta}, \eps)) \cdot \eps / 2 \le W(\mgr, \tilde{\mgr}) < \delta$, which implies
    \begin{equation*}
        \mgr(B(x_{\delta}, \eps/2)) < \frac{2 \delta}{\eps}.
    \end{equation*}
    Taking $\delta \to 0$ and choosing a convergent subsequence of $x_{\delta}$ by compactness we find a point ${x_0 \in \supp(\mgr)}$, such that $\mgr(B(x_{0}, \eps/2)) = 0$, which contradicts the definition of $\supp({\mgr})$.
\end{proof}

Let us intoduce another useful notation: for a measure $\mgr \in \mMG$ consider a map $T_{\mgr} : \mS \to \mS$ defined by the following formula:
\begin{equation*}
    T_{\mgr} (A) = \bigcap_{g \in \supp(\mgr)} g(A).
\end{equation*}

We are going to prove some properties of $T_{\mgr}$.

\begin{lm} \label{lm:compPertr}
    For any $\mgr \in \mMG$, $A \in \mS$ and compact $C \subset X \setminus T_{\mgr} (A)$ there exists $\delta > 0$ such that for any $\tilde{\mgr} \in \mMG$ and $\tilde{A} \in \mS$ if $W(\mgr, \tilde{\mgr}) < \delta$ and $\dH(A, \tilde{A}) < \delta$ then
    \begin{equation} \label{compPertrClaim}
        C \cap T_{\tilde{\mgr} (\tilde{A})} = \varnothing.
    \end{equation}
\end{lm}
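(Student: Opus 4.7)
The plan is to exploit the positive separation between the compact set $C$ and the closed set $T_{\mgr}(A)$, combined with compactness of $C$ and $G$, to replace the pointwise obstruction "$y \notin T_{\mgr}(A)$" by a uniform obstruction involving only finitely many isometries in $\supp(\mgr)$, which can then be transferred to $\supp(\tilde{\mgr})$ via Lemma \ref{lm:clSupp}.

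First I would set $\eta := \dist(C, T_{\mgr}(A)) > 0$, which is strictly positive since $C$ is compact and disjoint from the closed set $T_{\mgr}(A)$. Applying Lemma \ref{lm:neighbOfInt} to the family $\mA := \{g(A) : g \in \supp(\mgr)\}$ produces $\delta_0 \in (0, \eta)$ such that every point within $\delta_0$ of every $g(A)$ lies in $\ball(T_{\mgr}(A), \eta/2)$. Consequently, for each $y \in C$ there exists $g_y \in \supp(\mgr)$ with $d(y, g_y(A)) \ge \delta_0$. Continuity of the function $z \mapsto d(z, g_y(A))$ implies that $d(z, g_y(A)) > \delta_0/2$ holds on an open ball $\ball(y, r_y)$, and compactness of $C$ yields a finite subcover, i.e. points $y_1, \ldots, y_N \in C$ and isometries $g_1, \ldots, g_N \in \supp(\mgr)$ such that $C \subset \bigcup_{i=1}^N \ball(y_i, r_{y_i})$ and $d(z, g_i(A)) > \delta_0/2$ for every $z \in \ball(y_i, r_{y_i})$.

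Next I would invoke Lemma \ref{lm:clSupp} to choose $\delta_1 > 0$ so that $W(\mgr, \tilde{\mgr}) < \delta_1$ guarantees $\supp(\mgr) \subset \ball(\supp(\tilde{\mgr}), \delta_0/8)$; in particular, each $g_i$ admits a companion $\tilde{g}_i \in \supp(\tilde{\mgr})$ with $\dC(g_i, \tilde{g}_i) < \delta_0/8$. Setting $\delta := \min(\delta_1, \delta_0/8)$ and assuming additionally $\dH(A, \tilde{A}) < \delta$, the fact that each $g_i$ is an isometry gives the estimate
\begin{equation*}
    \dH(g_i(A), \tilde{g}_i(\tilde{A})) \le \dH(g_i(A), g_i(\tilde{A})) + \dH(g_i(\tilde{A}), \tilde{g}_i(\tilde{A})) \le \dH(A, \tilde{A}) + \dC(g_i, \tilde{g}_i) < \delta_0/4.
\end{equation*}
Therefore, for every $z \in \ball(y_i, r_{y_i})$,
\begin{equation*}
    d(z, \tilde{g}_i(\tilde{A})) \ge d(z, g_i(A)) - \dH(g_i(A), \tilde{g}_i(\tilde{A})) > \delta_0/2 - \delta_0/4 > 0,
\end{equation*}
so $z \notin \tilde{g}_i(\tilde{A}) \supseteq T_{\tilde{\mgr}}(\tilde{A})$. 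Since the balls $\ball(y_i, r_{y_i})$ cover $C$, the conclusion (\ref{compPertrClaim}) follows.

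The main obstacle is the uniformity step: a priori each point of $C$ witnesses its own $g_y \in \supp(\mgr)$, and Lemma \ref{lm:clSupp} only permits us to perturb finitely many isometries at once without losing control of $\dC$. Passing from the pointwise selection $y \mapsto g_y$ to a finite collection that simultaneously witnesses a uniform lower bound on all of $C$, via continuity followed by a finite subcover, is precisely what enables the subsequent Wasserstein and Hausdorff perturbations to preserve the disjointness.
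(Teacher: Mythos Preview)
Your argument is correct and follows the same strategy as the paper: positive separation between $C$ and $T_{\mgr}(A)$, Lemma~\ref{lm:neighbOfInt} to pass from intersections of neighborhoods to a neighborhood of the intersection, and Lemma~\ref{lm:clSupp} to transfer elements of $\supp(\mgr)$ to $\supp(\tilde\mgr)$. The one difference is your finite-subcover detour, which is unnecessary and stems from a misreading of Lemma~\ref{lm:clSupp}: its conclusion $\supp(\mgr)\subset\ball(\supp(\tilde\mgr),\eps)$ already says that \emph{every} $g\in\supp(\mgr)$ admits some $\tilde g\in\supp(\tilde\mgr)$ with $\dC(g,\tilde g)<\eps$, uniformly in $g$. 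The paper exploits this directly: for any $x\in T_{\tilde\mgr}(\tilde A)$ and any $g\in\supp(\mgr)$ one picks such a $\tilde g$, uses $\tilde g^{-1}(x)\in\tilde A$ together with $\dH(A,\tilde A)<\tdelta/2$ to obtain $d(g^{-1}(x),A)<\tdelta$, whence $x\in\bigcap_{g\in\supp(\mgr)}\ball(g(A),\tdelta)\subset\ball(T_{\mgr}(A),\eps)$, which is disjoint from $C$. This avoids singling out finitely many witnesses $g_1,\dots,g_N$ and the associated covering of $C$.
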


\begin{proof}[Proof of Lemma \ref{lm:compPertr}]
    Since both $T_{\mgr} (A)$ and $C$ are compact sets there exists $\eps > 0$, such that
    \begin{equation} \label{compDist}
        d(x, y) > \eps
    \end{equation}
    for any $x \in T_{\mgr} (A)$ and $y \in C$. By Lemma \ref{lm:neighbOfInt} we can choose $\tdelta > 0$ such that
    \begin{equation} \label{tMuANeighb}
        \bigcap_{g \in \supp{\mgr}} B(g(A), \tdelta) \subset B \left( \bigcap_{g \in \supp{\mgr}} g(A), \eps \right) = B\left( T_{\mgr} (A), \eps \right).
    \end{equation}
    According to Lemma \ref{lm:clSupp} there exists $\delta > 0$, such that for any $g \in \supp(\mgr)$ there exists $\tilde{g} \in \supp(\tilde{\mgr}))$, such that
    \begin{equation} \label{tildeg}
        \dC(g, \tilde{g}) < \tdelta / 2.
    \end{equation}
    Making $\delta$ smaller if necessary we can also guarantee that
    \begin{equation} \label{setDist}
        \dH(A, \tilde{A}) < \tdelta / 2.
    \end{equation}
    It remains to derive that the choice of $\delta$ described above implies
    \begin{equation*}
        C \cap T_{\tilde{\mgr} (\tilde{A})} = \varnothing.
    \end{equation*}
    Indeed, choose a point $x \in \T_{\tilde{\mgr}} (\tilde{A})$ and pick any $g \in \supp{\mgr}$. There exists $\tilde{g} \in \supp(\tmgr)$ such that estimate (\ref{tildeg}) holds. Combining it with the fact that $\tilde{g}^{-1} (x) \in \tilde{A}$ and inequality (\ref{setDist}) we derive
    \begin{equation*}
        d(g^{-1} (x), A) < \tdelta
    \end{equation*}
    for any $g \in \supp(\mgr)$. In other words,
    \begin{equation*}
        x \in \bigcap_{g \in \supp(\mgr)} B (g(A), \tdelta).
    \end{equation*}
    This fact combined with (\ref{tMuANeighb}) tells us that
    \begin{equation*}
        x \in B(T_{\mgr} (A), \eps)
    \end{equation*}
    and from the definition of $\eps$ (formula (\ref{compDist})) we deduce that $x \notin C$. Since $x$ was an arbitrary point in $T_{\tmgr} (\tilde{A})$ we obtain equality (\ref{compPertrClaim}).
\end{proof}

One of the key properties of the map $T_{\mgr}$ is the following lower semi-continuity:
\begin{lm} \label{lm:semicont}
    If a sequence of measures $\mgr_n \in \mMG$ converges to $\mgr$ in weak-* topology and $A_n$ converges to $A$ with respect to Hausdorff distance then
    \begin{equation*}
        \mH(T_{\mgr} (A), A) \le \liminf_{n \to \infty} \mH(T_{\mgr_n} (A_n), A_n).
    \end{equation*}
\end{lm}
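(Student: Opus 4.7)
The plan is to reduce $\mH(T_{\mgr}(A),A)$ to a single-sided expression and then exploit the uniform convergence of isometries. The first observation is that for any $g_0\in\supp(\mgr)$ one has $T_{\mgr}(A)\subseteq g_0(A)$, so the second term in the $\max$ defining $\mH(T_{\mgr}(A),A)$ is realized as $0$ by $h=g_0$. Hence
\begin{equation*}
    \mH(T_{\mgr}(A),A)=\min_{g\in G}D\!\left(g(T_{\mgr}(A)),A\right),
\end{equation*}
and the same reduction applies to every $\mH(T_{\mgr_n}(A_n),A_n)$. So the problem becomes a lower semi-continuity statement for the one-sided quantity on the right.

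Setting $L=\liminf_n \mH(T_{\mgr_n}(A_n),A_n)$ and assuming $L<\infty$ (otherwise there is nothing to prove), I would pass to a subsequence (still indexed by $n$) achieving the liminf and, by (\ref{minFromComp}), pick minimizers $g_n\in G$ so that $D(g_n(T_{\mgr_n}(A_n)),A_n)=\mH(T_{\mgr_n}(A_n),A_n)$. Compactness of $(G,\dC)$ yields a further subsequence with $g_n\to g$ uniformly. The target inequality $D(g(T_{\mgr}(A)),A)\le L$ amounts to showing that for every $x\in A$ there exists $z\in T_{\mgr}(A)$ with $d(x,g(z))\le L$. Given such an $x$, the Hausdorff convergence $A_n\to A$ yields $x_n\in A_n$ with $x_n\to x$; by compactness pick $z_n\in T_{\mgr_n}(A_n)$ achieving $d(x_n,g_n(z_n))=d(x_n,g_n(T_{\mgr_n}(A_n)))\le \mH(T_{\mgr_n}(A_n),A_n)$, and extract $z_n\to z$ along a further subsequence.

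The key step is to certify that $z\in T_{\mgr}(A)$. Given any $g_0\in\supp(\mgr)$, Lemma \ref{lm:clSupp} provides $f_n\in\supp(\mgr_n)$ with $\dC(f_n,g_0)\to 0$. Because $z_n\in T_{\mgr_n}(A_n)\subseteq f_n(A_n)$, we have $f_n^{-1}(z_n)\in A_n$, and uniform convergence $f_n^{-1}\to g_0^{-1}$ combined with $z_n\to z$ gives $f_n^{-1}(z_n)\to g_0^{-1}(z)$. Since $D(A,A_n)\to 0$ and $A$ is closed, the limit $g_0^{-1}(z)$ must lie in $A$, hence $z\in g_0(A)$. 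As $g_0\in\supp(\mgr)$ was arbitrary, $z\in T_{\mgr}(A)$.

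To close the argument, continuity of the action in $\dC$ gives $g_n(z_n)\to g(z)$, so $d(x,g(z))=\lim_n d(x_n,g_n(z_n))\le L$. Since $x\in A$ was arbitrary, $D(g(T_{\mgr}(A)),A)\le L$ and, via the reduction above, $\mH(T_{\mgr}(A),A)\le L$. The main obstacle is the middle paragraph: weak-$*$ perturbation of $\mgr$ can drastically shrink the intersection $T_{\mgr}(A)$, so one cannot expect Hausdorff continuity of the map $(\mgr,A)\mapsto T_{\mgr}(A)$; the saving grace is that we need only trace one specific limit point $z$ back into $T_{\mgr}(A)$, and Lemma \ref{lm:clSupp} supplies exactly the one-sided approximation of supports required for this.
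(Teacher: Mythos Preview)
Your argument is correct. The reduction to the one-sided quantity $\min_{g}D(g(T_{\mgr}(A)),A)$ matches the paper's first step, but from there the two proofs diverge. The paper does not extract minimizers and limit points; instead it invokes Lemma~\ref{lm:compPertr} (which in turn rests on Lemma~\ref{lm:neighbOfInt}) with the compact set $C=X\setminus\ball(T_{\mgr}(A),\eps)$ to conclude that $T_{\mgr_n}(A_n)\subset\ball(T_{\mgr}(A),\eps)$ for all large $n$, and then reads off $D(g(T_{\mgr}(A)),A)\le D(g(T_{\mgr_n}(A_n)),A)+\eps$ for every $g$. Your route bypasses both auxiliary lemmas: you prove directly, via Lemma~\ref{lm:clSupp} and a sequential limit, that any accumulation point of $T_{\mgr_n}(A_n)$ lies in $T_{\mgr}(A)$, which is exactly the content of Lemma~\ref{lm:compPertr} specialized to the situation at hand. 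The trade-off is that the paper isolates the upper semicontinuity of $(\mgr,A)\mapsto T_{\mgr}(A)$ as a reusable statement, whereas your argument is shorter and self-contained but buries that fact inside the proof. One minor point worth making explicit: your extraction of $z_n\in T_{\mgr_n}(A_n)$ tacitly assumes these sets are nonempty along the subsequence; this is guaranteed by $L<\infty$, and in the complementary case $T_{\mgr}(A)=\varnothing$ your ``key step'' would manufacture a point in the empty set, forcing $L=\infty$ and giving the inequality trivially.
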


\begin{rem}

    Notice that $\mH (T_{\mgr} (A), A)$ is not continuous with respect to $\mgr$. For example, one can consider a circle $X = \s^1$ (parametrized by $[0, 1)$) acting on itself by translations and a sequence of measures given by
    \begin{equation*}
        \mgr_n = \begin{cases}
            0, \quad \text{with probability $1 - 1/n$;} \\
            1/2, \quad \text{with probability $1/n$.}
        \end{cases}
    \end{equation*}
    Then $\mgr_n$ converges in the weak-* topology to $\mgr = \delta_{0}$, but for any $A$ with a diameter less than $1/2$ we have $T_{\mgr_{n}} (A) = \varnothing$ and $T_{\mgr} (A) = A$.

\end{rem}

\begin{proof}[Proof of Lemma \ref{lm:semicont}]
    First of all, let us notice that by triangle inequality (established in Lemma \ref{lm:triangle})
    \begin{equation*}
        \mH(T_{\mgr_n} (A_n), A_n) \ge \mH(T_{\mgr_n} (A_n), A) - \mH(A_n, A).
    \end{equation*}
    Since $\mH$ is continuous (again by Lemma \ref{lm:triangle}) we have
    \begin{equation*}
        \lim_{n \to \infty} \mH(A_n, A) = 0,
    \end{equation*}
    so it is enough to show that
    \begin{equation*}
        \mH(T_{\mgr} (A), A) \le \liminf_{n \to \infty} \mH(T_{\mgr_n} (A_n), A).
    \end{equation*}
    Notice that for any $h \in \supp(\mgr)$ we have
    \begin{equation*}
        h^{-1} (T_{\mgr} (A)) \subset A,
    \end{equation*}
    hence
    \begin{equation*}
        \mH(T_{\mgr} (A), A) = \min_{g \in G} D(g(T_{\mgr} (A)), A).
    \end{equation*}
    Also, from definition of $\mH$ it follows that
    \begin{equation*}
        \mH(T_{\mgr_n} (A_n), A) \ge \min_{g \in G} D(g(T_{\mgr_n} (A_n)), A),
    \end{equation*}
    so it is enough to verify that
    \begin{equation} \label{asymmIneq}
        D(T_{\mgr} (A), A) \le \liminf_{n \to \infty} D(T_{\mgr_n} (A_n), A).
    \end{equation}
    Let us fix positive $\eps > 0$ and choose a compact set
    \begin{equation*}
        C = X \setminus B(T_{\mgr} (A), \eps).
    \end{equation*}
    By Lemma \ref{lm:compPertr} there exists $\delta > 0$ such that if $W(\mgr, \mgr_n) < \delta$ and $\dH(A, A_n) < \delta$ then $C \cap T_{\mgr_n} (A_n)$. Hence for $n$ big enough we have
    \begin{equation*}
        T_{\mgr_n} (A_n) \subset B(T_{\mgr} (A, \eps)).
    \end{equation*}
    It follows that for $n$ big enough
    \begin{equation*}
        D(T_{\mgr} (A), A) \le D(T_{\mgr_n} (A_n), A) + \eps.
    \end{equation*}
    Since $\eps$ is an arbitrary positive number inequality (\ref{asymmIneq}) follows, which finishes the proof of Lemma \ref{lm:semicont}.
\end{proof}

Let us also define the set $\mS_{\eps} \subset \mS$, that contains all closed subsets $A$ of $X$, such that for some $x_1, x_2 \in X$ we have $\ball (x_1, \eps) \subset A$ and $\ball (x_2, \eps) \subset X \setminus A$. The space $\mS_{\eps}$ is compact with respect to the Hausdorff distance. 

The main tool used in the proof of Proposition \ref{prop:dens} is the following

\begin{lm} \label{lm:sep}
    For any fixed $\eps > 0$ there exists $r > 0$, such that for any $\mgr \in K$ and any $A \in \mS_{\eps}$ we have
    \begin{equation*}
        \mH(T_{\mgr}(A), A) > r.
    \end{equation*}
\end{lm}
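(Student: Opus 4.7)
The plan is to argue by contradiction using two compactness arguments together with the lower semi-continuity result of Lemma \ref{lm:semicont}. Suppose the conclusion fails. Then there exist sequences $\mgr_n \in K$ and $A_n \in \mS_\eps$ with $\mH(T_{\mgr_n}(A_n), A_n) \to 0$. Since $K$ is weak-* compact and $\mS_\eps$ is compact in the Hausdorff distance, after passing to a subsequence I can assume $\mgr_n \to \mgr \in K$ and $A_n \to A$ in Hausdorff distance. A short verification shows that the limit $A$ still lies in $\mS_\eps$, because the balls of radius $\eps$ contained in $A_n$ and in $A_n^c$ persist in the limit; in particular $A$ is a proper closed subset of $X$.

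Lemma \ref{lm:semicont} then yields $\mH(T_\mgr(A), A) \le \liminf_n \mH(T_{\mgr_n}(A_n), A_n) = 0$. I next extract a structural consequence. By the reformulation (\ref{minFromComp}) of $\mH$ as a minimum over $G$, there exists $g^* \in G$ with $D(g^*(T_\mgr(A)), A) = 0$, equivalently $A \subseteq g^*(T_\mgr(A))$. By the very definition of $T_\mgr$ one has $T_\mgr(A) \subseteq g_0(A)$ for every $g_0 \in \supp(\mgr)$, so $A \subseteq g^* g_0(A)$ and hence $(g^* g_0)^{-1}(A) \subseteq A$. Lemma \ref{lm:immers} upgrades this containment to the equality $(g^* g_0)^{-1}(A) = A$, i.e.\ $g_0(A) = (g^*)^{-1}(A)$ for every $g_0 \in \supp(\mgr)$. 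Setting $B := (g^*)^{-1}(A)$, both $A$ and $B$ are proper closed subsets of $X$ and $g(A) = B$ for all $g \in \supp(\mgr)$, which contradicts the \textbf{no deterministic images} condition for the limit measure $\mgr \in K$.

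The main obstacle, and the essential non-abelian content, is the passage from the symmetric equality $\mH(T_\mgr(A), A) = 0$ to the uniform constancy of $g_0(A)$ as $g_0$ varies over $\supp(\mgr)$. The definition of $\mH$ only produces a single global isometry $g^*$ carrying $T_\mgr(A)$ onto a superset of $A$; the trick is to combine this one-sided relation with the trivial inclusion $T_\mgr(A) \subseteq g_0(A)$ and then invoke Lemma \ref{lm:immers} (which crucially uses that each $g^* g_0$ is a bijective isometry of a compact space) in order to collapse the whole family $\{g_0(A) : g_0 \in \supp(\mgr)\}$ to a single set, producing exactly the deterministic image ruled out by hypothesis.
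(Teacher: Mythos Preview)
Your argument is correct and is essentially the same as the paper's: a compactness/contradiction argument combined with Lemma \ref{lm:semicont} reduces to the pointwise claim $\mH(T_\mgr(A),A)>0$, which you then establish (as the paper does) by extracting a single $g^*\in G$ with $A\subseteq g^*(T_\mgr(A))$, composing with the trivial inclusion $T_\mgr(A)\subseteq g_0(A)$, and invoking Lemma \ref{lm:immers} to force $g_0(A)=(g^*)^{-1}(A)$ for all $g_0\in\supp(\mgr)$. The only cosmetic difference is that the paper first isolates the pointwise inequality (\ref{posDist}) and applies Lemma \ref{lm:immers} to the set $h(A)$ with the isometry $g^{-1}h^{-1}$, whereas you apply it to $A$ with $(g^*g_0)^{-1}$; these are the same computation.
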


\begin{proof}
    First of all, for any $\mgr$ that satisfies \textbf{no deterministic images} condition and any $A \in \mS_{\eps}$ we have
    \begin{equation} \label{posDist}
        \mH(T_{\mgr} (A), A) > 0.
    \end{equation}
    Indeed, if $\mH(T_{\mgr} (A), A) = 0$ then there exists $g \in G$ such that $g(T_{\mgr} (A)) = A$. Hence, for any $h \in \supp(\mgr)$ we have $h (A) \supset g^{-1} (A)$. Noticing that $g^{-1} (A) = g^{-1} h^{-1} (h(A))$ and applying Lemma \ref{lm:immers} we conclude that $h(A) = g^{-1} (A)$. Since the last equality holds for any $h \in \supp(\mgr)$ it contradicts assumption (\ref{transCond}).


    Assume that Lemma \ref{lm:sep} does not hold. Then there exists a sequence of measures $\mgr_n \in K$ and a sequence of sets $A_n \in \mS_{\eps}$, such that
    \begin{equation} \label{zeroLim}
        \lim_{n \to \infty} \mH(T_{\mgr_n} (A_n), A_n) = 0.
    \end{equation}
    Since both $K$ and $\mS_{\eps}$ are compact we can choose a subsequence $n_k \to \infty$, such that $\mgr_{n_k} \to_{k \to \infty} \mgr$ and $A_{n_k} \to_{k \to \infty} A$ for some $\mgr \in K$ and $A \in \mS_{\eps}$. We need $A$ to be an element of $\mS_{\eps}$ and not just $\mS$ to guarantee that $A$ is a proper subset of $X$ and formula (\ref{posDist}) applies. For said subsequence we combine formula (\ref{zeroLim}) with Lemma \ref{lm:semicont} and deduce that
    \begin{equation*}
        \mH(T_{\mgr} (A), A) = 0,
    \end{equation*}
    which contradicts inequality (\ref{posDist}) and the proof is finished.
\end{proof}

Finaly, we are ready finish that section with the

\begin{proof}[Proof of Proposition \ref{prop:dens}]

    Let us argue by contradiction. 
    If the statement is not true, there exists $\eps > 0$ and $x \in X$, such that the complement of the $\eps$-neighborhood of\\ $\supp(\mgr_{m} * \mgr_{m - 1} * \ldots * \mgr_{1} * \delta_{x})$ lies in $\mS_{\eps}$ for any $m \in \N$:
    \begin{equation*}
        \ball (\supp(\mgr_{m} * \mgr_{m - 1} * \ldots * \mgr_{1} * \delta_{x}), \eps)^{c} \in \mS_{\eps}.
    \end{equation*}
    Note that
    \begin{equation*}
        \ball (\supp(\mgr_{m} * \mgr_{m - 1} * \ldots * \mgr_{1} * \delta_{x}), \eps)^{c} = T_{\mgr_{m} * \mgr_{m - 1} * \ldots * \mgr_{1}} \left( \ball (x, \eps)^{c} \right).
    \end{equation*}
    Now let us finish the proof of Proposition \ref{prop:dens}. Let us denote by $A$ the set $\ball(x, \eps)^{c}$ and consider the orbit $ \{T_{\mgr_{m} * \mgr_{m - 1} * \ldots * \mgr_{1}} (A)) \}$. By our assumption it is contained in the set $\mS_{\eps}$. Also, by Lemma \ref{lm:sep} for any $m \in \N$ we have
    \begin{equation} \label{sepIn}
        \mH(T_{\mgr_{m} * \mgr_{m - 1} * \ldots * \mgr_{1}} (A), T_{\mgr_{m + 1} * \mgr_{m} * \ldots * \mgr_{1}} (A)) > r > 0.
    \end{equation}
    Observe that
    \begin{equation*}
        T_{\mgr_{m + 1} * \mgr_{m} * \ldots * \mgr_{1}} (A) \cle T_{\mgr_{m} * \mgr_{m - 1} * \ldots * \mgr_{1}} (A),
    \end{equation*}
    which fact together with inequality (\ref{sepIn}) and Lemma \ref{lm:mon} implies that for any $m, n \in \N$ we have
    \begin{equation*}
        \mH(T_{\mgr_{m} * \mgr_{m - 1} * \ldots * \mgr_{1}} (A), T_{\mgr_{n} * \mgr_{n - 1} * \ldots * \mgr_{1}} (A)) > r.
    \end{equation*}
    Applying inequality (\ref{HHausdIneq}) we deduce that
    \begin{equation*}
        \dH(T_{\mgr_{m} * \mgr_{m - 1} * \ldots * \mgr_{1}} (A), T_{\mgr_{n} * \mgr_{n - 1} * \ldots * \mgr_{1}} (A)) > r.
    \end{equation*}
    which is impossible, since a compact set $\mS_{\eps}$ cannot contain an infinite $r$-separated subset. We have arrived to a contradiction that finishes the proof of Proposition \ref{prop:dens}.

\end{proof}

\section{Appendix} \label{sec:IKproof}

We will outline a new proof the classical Theorem \ref{thm:IK}. Notice that for our methods to work we need to assume that $G$ is a compact second countable (metrizable) topological group. We will start with the following
\begin{defn}
    We say that a measure $\mgr \in \mMG$ satisfies \textbf{no deterministic $\eps$-images} condition for some $\eps > 0$ if there does not exist a pair of closed subsets $A, B \subset X$, such that all four sets $A, B, A^{c}$ and $B^{c}$ contain an open ball of radius $\eps$ and
    \begin{equation*} 
        g(A) = B \quad \text{for any $g \in \supp(\mgr)$.}
    \end{equation*}
\end{defn}

The following technical Lemma will be useful:

\begin{lm} \label{lm:Aper}

    Let $G$ be a compact metrizable group. Then

    \begin{enumerate}

        \item[{\bf (i)}] if $\mgr \in \mMG$ is \textbf{coset aperiodic} then $\mgr$ satisfies \textbf{no deterministic images} condition.

        \item[{\bf (ii)}] if $\mgr \in \mMG$ is \textbf{adapted} and \textbf{strictly aperiodic} then for any $\eps > 0$ there exists $n \in \N$ such that $\mgr^{*n}$ satisfies \textbf{no deterministic $\eps$-images} condition.
    \end{enumerate}

\end{lm}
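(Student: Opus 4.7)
I would argue by contradiction. Suppose $\mgr$ is coset aperiodic but fails the no deterministic images condition: there exist proper closed $A, B \subset G$ with $g A = B$ for every $g \in \supp(\mgr)$. Fix any $g_0 \in \supp(\mgr)$; for any other $g \in \supp(\mgr)$ one has $g_0^{-1} g \cdot A = g_0^{-1}(B) = A$, so $g_0^{-1} g \in H := \{h \in G : h A = A\}$, and hence $\supp(\mgr) \subset g_0 H$. If $H$ were equal to $G$, then $A$ would be invariant under the transitive left-action of $G$ on itself, forcing $A \in \{\varnothing, G\}$ and contradicting its properness. Thus $H$ is a proper closed subgroup and $\supp(\mgr)$ is contained in a coset of it, contradicting coset aperiodicity.

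\textbf{Part (ii).} Proceed by contradiction: fix $\eps > 0$ and assume that for every $n \in \N$ there exist closed $A_n, B_n \subset G$ with $A_n, B_n, A_n^c, B_n^c$ each containing an open ball of radius $\eps$ and $g A_n = B_n$ for every $g \in \supp(\mgr^{*n})$. The stabilizer argument from (i) applied to $\mgr^{*n}$ yields $\supp(\mgr^{*n}) \subset g_n H_n$ with $H_n := \{h : h A_n = A_n\}$ a proper closed subgroup of $G$. The ball condition produces a uniform gap: since $A_n$ is a union of $H_n$-orbits and contains $\ball(x_n, \eps)$ for some $x_n$, while $A_n^c \supset \ball(y_n, \eps)$ for some $y_n$, one has $d(H_n x_n, y_n) \geq \eps$, and by bi-invariance of $d$ this becomes $d(H_n, y_n x_n^{-1}) \geq \eps$. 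Hence every $H_n$ lies in the family of closed subgroups of $G$ whose complement contains an open $\eps$-ball, and this family is compact in $\dH$. Passing to a subsequence, $H_{n_k} \to H$ in $\dH$ and $g_{n_k} \to g$ in $G$; the limit $H$ is still a proper closed subgroup (its complement contains an $\eps/2$-ball, say) and $g_{n_k} H_{n_k} \to g H$ in the Hausdorff metric.

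The plan for the remaining step is to combine the inclusions $\supp(\mgr) \cdot \supp(\mgr^{*n}) \subset \supp(\mgr^{*(n+1)}) \subset g_{n+1} H_{n+1}$ with the adaptedness of $\mgr$ and the strictly aperiodic hypothesis to reach a contradiction. A natural route is to show that the normal closure $N$ of the Hausdorff limit $H$ (i.e.\ the closed subgroup generated by all $G$-conjugates of $H$) is still a \emph{proper} subgroup of $G$ and that $\supp(\mgr)$ lies in a single coset of $N$: these two facts together directly contradict strict aperiodicity. The first of these, properness of $N$, is where the uniform $\eps$-gap is essential, since a priori the normal closure could fill out $G$; controlling this requires carefully exploiting the compact class of $\eps$-gapped subgroups to which each $H_n$ belongs. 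The second, that $\supp(\mgr)$ sits in one coset of $N$, should follow by iterating the coset-propagation inclusions and using adaptedness to spread coset information along the closed subgroup generated by $\supp(\mgr)$. Carrying out both of these simultaneously---linking the \emph{abstract} Hausdorff limit with the \emph{group-theoretic} structure of $G$---is the principal technical obstacle of the proof.
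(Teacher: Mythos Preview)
Your Part (i) is correct and matches the paper's argument essentially verbatim.

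For Part (ii) your setup is sound---the stabilizer subgroups $H_n$, the uniform $\eps$-gap for $H_n$ via bi-invariance, and the fact that a Hausdorff limit of closed subgroups is again a closed (and here proper) subgroup are all fine. The gap is exactly where you flag it: the plan to pass to the \emph{normal closure} $N$ of the limit $H$ and argue that $N$ is proper has a genuine obstruction. In a compact group with few normal subgroups (e.g.\ $G=\mathrm{SO}(3)$, which is simple), the normal closure of \emph{any} nontrivial proper closed subgroup is all of $G$, so ``$N$ proper'' will typically fail, and nothing in your argument rules this out. Likewise, your coset-propagation inclusions relate $\supp(\mgr^{*n})$ to $g_nH_n$ for varying $n$, but give no direct control of $\supp(\mgr)$ relative to a single normal subgroup. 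So the proposal, as it stands, does not close.

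The paper avoids this by producing normality \emph{structurally} rather than by closure. It argues that one can choose a single witness set $A$ such that $g_n\cdots g_1(A)$ depends only on $n$ (not on the particular $g_i\in\supp(\mgr)$); using compactness of $G$ it extends this to negative $n$, obtaining a bi-infinite orbit $\{A_n\}_{n\in\Z}$ with $\tilde g(A_{n-1})=A_n$ for every $\tilde g\in\supp(\mgr)$. This gives a homomorphism from the subgroup $F$ generated by $\supp(\mgr)$ into $\Z$ or $\Z/m\Z$ (sending $g$ to the index $n$ with $g(A)=A_n$). The kernel is then normal in $F$ automatically, and since $\mgr$ is adapted $F$ is dense, so the closure of the kernel is a proper closed \emph{normal} subgroup of $G$ containing $g^{-1}\supp(\mgr)$---contradicting strict aperiodicity. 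The key idea you are missing is this homomorphism-to-a-cyclic-group construction; it is what replaces the hopeless ``take normal closure'' step.
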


\begin{proof}

    \hfill
    \begin{enumerate}

        \item[{\bf (i)}] Assume $\mgr \in \mMG$ does not satisfy the \textbf{no deterministic images} condition. Then there exists a proper closed set $A \subset G$, such that
        \begin{equation} \label{detImEq}
            g_1(A) = g_2(A) \quad \text{for any $g_1, g_2 \in \supp(\mgr)$.}
        \end{equation}
        Consider a subgroup $H \le G$, defined by
        \begin{equation*}
            H = \{h \in G\, |\, h(A) = A \}.
        \end{equation*}
        It is easy to conclude from (\ref{detImEq}) that
        \begin{equation*}
            g_{2}^{-1} g_1 \in H \quad \text{for any $g_1, g_2 \in \supp(\mgr)$.}
        \end{equation*}
        It is also easy to see that $H$ is a proper closed subgroup of $G$ since $A$ is a proper closed subset. Hence
        \begin{equation*}
            \supp(\mgr) \subset g H \quad \text{for any $g \in \supp(\mgr)$}
        \end{equation*}
        and $\mgr$ is not \textbf{coset aperiodic}.

        \item[{\bf (ii)}] Consider an \textbf{adapted} measure $\mgr \in \mMG$ and $\eps > 0$, such that $\mgr^{*n}$ does not satisfy \textbf{no deterministic $\eps$-images} condition for any $n \in \N$. Then there exists an $\eps$-proper closed set $A \subset G$, such that for any $n \in \N$ and any $g_1, \ldots, g_n, \tilde{g}_1, \ldots, \tilde{g}_n \in \supp(\mgr)$ we have
        \begin{equation*}
            g_n \ldots g_1 (A) = \tilde{g}_n \ldots \tilde{g}_1 (A).
        \end{equation*}
        Take any $g \in \supp(\mgr)$ and define
        \begin{equation*}
            A_n = g^{n} (A) \quad \text{for $n \in \N \cup \{0\}$.}
        \end{equation*}
        Consider the smallest closed subspace $\mA \subset \mS_{\eps}$ that contains all $A_n$:
        \begin{equation*}
            \mA = \overline{\bigcup_{n = 0}^{\infty} A_n}.
        \end{equation*}
        $\mA$ is a compact subspace of $\mS_{\eps}$ and $A_n$ form a dense set in it. Now let us define $A_{-1}$. Notice that for any element $g$ of a compact group $G$ there exists a sequence $n_k \to \infty$, such that
        \begin{equation*}
            g^{n_k} \to_{k \to \infty} g^{-1}.
        \end{equation*}
        Take any $g \in \supp(\mgr)$ and define $A_{-1} = g^{-1} (A)$. Notice that
        \begin{equation*}
            A_{-1} = \lim_{k \to \infty} g^{n_k} (A),
        \end{equation*}
        hence
        \begin{equation} \label{prevLimit}
            A_{-1} = \lim_{k \to \infty} \tilde{g}^{n_k} (A),
        \end{equation}
        for any $\tilde{g} \in \supp(\mgr)$. It follows directly from (\ref{prevLimit}) that
        \begin{equation*}
            \tilde{g} (A_{-1}) = \lim_{k \to \infty} \tilde{g}^{n_k + 1} (A) = \lim_{k \to \infty} g^{n_k + 1} (A) = g (A_{-1}) = A.
        \end{equation*}
        Hence for any $\tilde{g} \in \supp(\mgr)$ we have
        \begin{equation*}
            \tilde{g} (A_{-1}) = A
        \end{equation*}
        Analogously we can define
        \begin{equation*}
            A_n = g^n (A) \quad \text{for $n \in \Z$.}
        \end{equation*}
        It follows in a similar manner that for any $n \in \N$ and any $g_1, \ldots, g_n \in \supp(\mgr)$ we have
        \begin{equation*}
            g_n^{-1} \ldots g_1^{-1} (A) = A_{-n}.
        \end{equation*}
        Let $F$ be the subgroup generated by $\supp(\mgr)$. Then for any $g \in F$ we can find a number $n \in \Z$, such that $g(A) = A_n$. If such number $n$ is unique then the map that takes $g$ to $n$ defines a homomorphism
        \begin{equation*}
            f: G \to \Z.
        \end{equation*}
        If such number $n$ is not unique then the trajectory $A_n$ is periodic and the map that takes $g$ to $n$ defines a homomorphism
        \begin{equation*}
            f: G \to \Z / m \Z \quad \text{for some $m \in \N$}.
        \end{equation*}
        In both cases the kernel $\ker(f)$ is a normal subgroup in $F$. Measure $\mgr$ is \textbf{adapted}, hence $F$ is dense in $G$ and hence $\ker(f)$ is normal in $G$. It is easy to see that $\overline{\ker(f)}$ is also normal in $G$. Hence $\overline{\ker(f)}$ is a proper closed normal subgroup and $\supp(\mgr) \subset g \left( \overline{\ker(f)} \right)$ for any $g \in \supp(\mgr)$.

    \end{enumerate}

\end{proof}

Now we are ready to prove Theorem \ref{thm:IK} for a metrizable groups:

\begin{proof}[Proof of Theorem \ref{thm:IK}]
    Consider an \textbf{adapted} and \textbf{strictly aperiodic} measure $\mgr \in \mMG$. As we have pointed out before, $\mgr$ need not be \textbf{coset aperiodic}, or, which is equivalent, does not have to satisfy \textbf{no deterministic images} condition, so simply applying Theorem \ref{thm:limitMeas} would not work.

    We will start by proving that for any $\eps > 0$ there exists $n \in \N$ such that $\supp(\mgr^{*n})$ is $\eps$-dense. Similarly to the proof of Proposition \ref{prop:dens} we consider
    \begin{equation*}
        A = \ball (\supp(\mgr), \eps/2)^{c}
    \end{equation*}
    If $\supp(\mgr^{*n})$ does not become $\eps$-dense then the whole trajectory $T_{\mgr^{*n}} (A)$ lies in $\mS_{\eps / 2}$. It is possible to prove that for $m$ big enough there exists $r > 0$, such that for any $A \in \mS_{\eps/2}$ we will have
    \begin{equation*}
        \mH(T_{\mgr^{*m}} (A), A) > r.
    \end{equation*}
    To observe that one needs to repeat the proof of Lemma \ref{lm:sep}, keeping in mind that thanks to part {\bf (ii)} of Lemma \ref{lm:Aper} for $m$ big enough the measure $\mgr^{*m}$ satisfies \textbf{no $\eps/2$ deterministic images} condition. So, analogously to the proof of Lemma \ref{lm:sep}, the set of points $\{T_{\mgr^{*mn}} (A)\}_{n \in \N}$ is $r$-separated and cannot lie in the compact set $\mS_{\eps/2}$.

    Hence for any $\eps > 0$ there exists $n \in \N$ such that $\supp(\mgr^{*n})$ is $\eps$-dense. The following fact can be obtained from the compactness of $\mS$:


    \begin{lm} \label{lm:IKfix}
        For any $\eps > 0$ there exists $n \in \N$ and $\delta > 0$, such that for any $\eps$-wide set $Q$ (see Definition \ref{def:epsWide}) we have
        \begin{equation*}
            [\mgr^{*n}] (Q) \ge \delta.
        \end{equation*}
    \end{lm}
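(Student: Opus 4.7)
The plan is to reduce Lemma \ref{lm:IKfix} to the $\eps$-density statement just established (that for every $\eps' > 0$ there exists $n$ with $\supp(\mgr^{*n})$ being $\eps'$-dense in $G$), via a simple compactness argument in the same spirit as Lemma \ref{lm:right_form}. Fix $\eps > 0$ and, using that density result, choose $n \in \N$ large enough that $\supp(\mgr^{*n})$ is $\eps/6$-dense in $G$. By Definition \ref{def:epsWide}, every $\eps$-wide set $Q$ contains some open ball $\ball(x, \eps/3)$, so it suffices to prove the lower bound $\mgr^{*n}(\ball(x, \eps/3)) \ge \delta$ uniformly in $x \in G$.

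The first step is pointwise positivity. For any $x \in G$, the $\eps/6$-density of $\supp(\mgr^{*n})$ yields a point $y \in \supp(\mgr^{*n})$ with $d(x, y) < \eps/6$, so $\ball(y, \eps/6) \subseteq \ball(x, \eps/3)$. Since $y$ lies in the support of $\mgr^{*n}$, the open ball $\ball(y, \eps/6)$ has strictly positive mass, and therefore $f(x) := \mgr^{*n}(\ball(x, \eps/3)) > 0$ for every $x \in G$.

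The second step promotes this to a uniform lower bound. I would observe that $f$ is lower semi-continuous on $G$: if $x_k \to x$ and $y \in \ball(x, \eps/3)$, then $d(x_k, y) < \eps/3$ for all sufficiently large $k$, so $\mathbf{1}_{\ball(x, \eps/3)}(y) \le \liminf_k \mathbf{1}_{\ball(x_k, \eps/3)}(y)$ pointwise, and Fatou's lemma gives $f(x) \le \liminf_k f(x_k)$. A strictly positive lower semi-continuous function on the compact group $G$ attains a positive minimum $\delta := \min_{x \in G} f(x) > 0$. Then for any $\eps$-wide $Q \supseteq \ball(x, \eps/3)$ we get $\mgr^{*n}(Q) \ge f(x) \ge \delta$, as required.

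The only real technical point, rather than a serious obstacle, is the insistence on open balls throughout so that the center-dependence is lower semi-continuous; this is exactly what Definition \ref{def:epsWide} already provides. Everything else is purely soft: the deep input (the density of $\supp(\mgr^{*n})$, which in turn rests on part \textbf{(ii)} of Lemma \ref{lm:Aper} and the analog of Lemma \ref{lm:sep} sketched in the preceding paragraph) has already been established, and from there the lemma is an immediate consequence of lower semi-continuity plus compactness of $G$.
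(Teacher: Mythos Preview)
Your argument is correct. The paper does not spell out a proof of Lemma~\ref{lm:IKfix}; it merely records that the statement ``can be obtained from the compactness of $\mS$,'' presumably via a subsequence/contradiction argument in the style of Lemma~\ref{lm:right_form} (extract $\eps$-wide sets $Q_k$ with $\mgr^{*n}(Q_k)\to 0$, pass to a limit ball, contradict the $\eps/8$-density of $\supp(\mgr^{*n})$). Your route is a mild repackaging of the same idea: instead of arguing by contradiction on the hyperspace $\mS$, you work directly on $G$, observing that $x\mapsto \mgr^{*n}(\ball(x,\eps/3))$ is lower semi-continuous and strictly positive, hence has a positive minimum on the compact group. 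Both arguments use exactly the same deep input (the $\eps$-density of $\supp(\mgr^{*n})$ already established from Lemma~\ref{lm:Aper}\textbf{(ii)} and the $\mS_{\eps/2}$ separation argument) and differ only in whether the soft compactness step is phrased as ``lsc $+$ compact $\Rightarrow$ attains min'' or as ``no uniform bound $\Rightarrow$ bad limit point.'' Your version has the minor advantage of being self-contained and avoiding the subsequence bookkeeping, at no extra cost.
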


    It remains to repeat the proof of Theorem \ref{thm:limitMeas} replacing Lemma \ref{lm:right_form} with Lemma \ref{lm:IKfix}.

\end{proof}

\begin{ex}[Stromberg, see \cite{S}] \label{ex:notAdopt}
    For $G = S_{3}$ there exists a pair of measures $\mgr_1, \mgr_2 \in \mMG$, such that both $\mgr_1$ and $\mgr_2$ are \textbf{strictly aperiodic} and \textbf{adapted}, but the sequence $\msp_n \in \mMG$ given by
    \begin{equation*}
        \msp_1 = \mgr_1, \quad \msp_{2n} = (\mgr_2 * \mgr_1)^{*n}, \quad \msp_{2n + 1} = \mgr_1 * \msp_{2n}
    \end{equation*}
    does not converge in $\mMG$.

    Namely, take
    \begin{equation*}
        \mgr_1 = \frac{1}{2} \delta_{(2 3)} + \frac{1}{2} \delta_{(1 2 3)}
    \end{equation*}
    and
    \begin{equation*}
        \mgr_2 = \frac{1}{2} \delta_{(2 3)} + \frac{1}{2} \delta_{(1 3 2)}.
    \end{equation*}

    Then it is easy to compute that $\supp(\msp_{2n}) = \{\id, (1 2)\}$ and $\supp(\msp_{2n + 1}) = \{(2 3), (1 2 3)\}$.
\end{ex}

\section*{Acknowledgements}

The author is grateful to A. Gorodetski and V. Kleptsyn for fruitful discussions. The author was supported in part by NSF grant DMS--2247966 (PI: A.\,Gorodetski).


\begin{thebibliography}{99}

\selectlanguage{russian}


\bibitem[AG]{AG} M. Anoussis, D. Gatzouras, A spectral radius formula for the Fourier transform on compact groups and applications to random walks, Adv. Math. {\bf 188} (2004), no.2, pp. 425–443.

\bibitem[AK]{AK} V. Arnold, A. Krylov, Uniform distribution of points on a sphere and certain ergodic properties of solutions of linear ordinary differential equations in a complex domain. (Russian) Dokl. Akad. Nauk SSSR {\bf 148} (1963), 9–12.


\bibitem[BE]{BE} A. Berger, S. Evans, A limit theorem for occupation measures of Lévy processes in compact groups, Stoch. Dyn. {\bf 13} (2013), no.1, 1250008, 16 pp.

\bibitem[B]{B} R. Bhattacharya, Speed of convergence of the $n$-fold convolution of a probability measure on a compact group, Z. Wahrscheinlichkeitstheorie und Verw. Gebiete {\bf 25} (1972/1973), pp. 1–10.


\bibitem[Bo1]{Bo1} B. Borda, Equidistribution of random walks on compact groups, Ann. Inst. Henri Poincar\'e Probab. Stat. {\bf 57} (2021), no.1, pp. 54–72.

\bibitem[Bo2]{Bo2} B. Borda, Equidistribution of random walks on compact groups II. The Wasserstein metric, Bernoulli {\bf 27} (2021), no.4, pp. 2598–2623.



\bibitem[CDK]{CDK} A. Cai, P. Duarte, S. Klein, Statistical properties for mixing Markov chains with applications to dynamical systems, 	arXiv:2210.16908 (2022).


\bibitem[DS]{DS} Z. Dietz, S. Sethuraman, Large deviations for a class of nonhomogeneous Markov chains. Ann. Appl. Probab. {\bf 15} (2005), no.1A, 421–486.

\bibitem[DF]{DF} D. Dolgopyat, B. Fayad, Limit theorems for toral translations. Hyperbolic dynamics, fluctuations and large deviations, 227–277. Proc. Sympos. Pure Math., {\bf 89}, American Mathematical Society, Providence, RI, 2015.

\bibitem[DSa]{DSa} D. Dolgopyat, O. Sarig, Local Limit Theorems for Inhomogeneous Markov Chains (to appear).



\bibitem[D]{D} N. Dungey, Time regularity for aperiodic or irreducible random walks on groups. Hokkaido Math. J. {\bf 37} (2008), no.1, 19–40.

\bibitem[F]{F} D. Fremlin, Measure Theory, Volume 4, Topological Measure Spaces, 2013.

\bibitem[G]{G} J. Gallier, Notes on group actions, manifolds, lie groups, and lie algebras, 2005.

\bibitem[Gi]{Gi} B. Gidas, Nonstationary Markov chains and convergence of the annealing algorithm. J. Statist. Phys. {\bf 39} (1985), no.1-2, 73–131.

\bibitem[GN]{GN} A. Gordillo-Merino, J. Navarro, Equidistribution of orbits of isometries on compact Riemannian manifolds. Expositiones Mathematicae, {\bf 38}, (3), (2020), 391-396.

\bibitem[GK]{GK} A. Gorodetski, V. Kleptsyn, Non-stationary version of Ergodic Theorem for random dynamical systems, arXiv:2305.05028 (2023).

\bibitem[GNe]{GNe} A. Gorodnik, A. Nevo, On Arnol’d’s and Kazhdan’s equidistribution problems. Ergodic Theory and Dynamical Systems, {\bf 32} (6), (2012), 1972-1990.


\bibitem[Ha]{Ha} P. Halmos, Measure theory, Springer New York, 1950.

\bibitem[HR]{HR} E. Hewitt, K. Ross, Abstract harmonic analysis. Vol. I. Structure of topological groups, integration theory, group representations. Second edition. Grundlehren der Mathematischen Wissenschaften [Fundamental Principles of Mathematical Sciences], 115. Springer-Verlag, Berlin-New York, 1979. ix+519 pp.



\bibitem[Ka]{Ka} Y. Kawada, On the probability distribution on a compact group. II. Proc. Phys.-Math. Soc. Japan (3) {\bf 23} (1941), 669–686.

\bibitem[KI]{KI} Y. Kawada, K. It\^o, On the probability distribution on a compact group.I., Proc. Phys.-Math. Soc. Japan (3) {\bf 22} (1940), pp. 977–998.

\bibitem[Kl]{Kl} B. Kloss, О вероятностных распределениях на бикомпактных топологических группах (Russian. English summary), Teor. Veroyatnost. i Primenen {\bf 4} (1959), pp. 255–290, (Title translation) Probability Distributions on Bicompact Topological Groups.

\bibitem[L]{L} P. L\'evy, L'addition des variables al\'eatoires d\'efinies sur un circonf\'erence (French), Bull. Soc. Math. France {\bf 67} (1939), pp. 1–41.

\bibitem[LV]{LV} E. Lindenstrauss, P. Varju, Random walks in the group of Euclidean isometries and self-similar measures. Duke Mathematical Journal. {\bf 165} (2014).

\bibitem[LL1]{LL1} G. Liu, W. Liu, On the strong law of large numbers for functionals of countable nonhomogeneous Markov chains. Stochastic Process. Appl. {\bf 50} (1994), no.2, 375–391.

\bibitem[LL2]{LL2} W. Liu, G. Liu, A class of strong laws for functionals of countable nonhomogeneous Markov chains. Statist. Probab. Lett. {\bf 22} (1995), no.2, 87–96.


\bibitem[M]{M} G. Monakov, Ergodic Theorem for nonstationary random walks on compact abelian groups, arXiv:2307.05798 (2023).

\bibitem[Mu]{Mu} H. Mustafayev, A note on the Kawada-It\^o theorem. Statist. Probab. Lett. {\bf 181} (2022), Paper No. 109261, 6 pp.

\bibitem[PRU]{PRU} A. Pr\'ekopa, A. R\'enyi, K. Urbanik, { О предельном распределении для сумм независимых случайных величин на бикомпактных коммутативных топологических группах}
(Russian. English summary), Acta Math. Acad. Sci. Hungar. {\bf 7} (1956), pp. 11–16, (Title translation) On the limiting distribution of sums of independent random variables in bicompact commutative topological groups.

\bibitem[RX]{RX} K. Ross, D. Xu, Norm convergence of random walks on compact hypergroups, Math. Z. {\bf 214} (1993), no.3, pp. 415–423.

\bibitem[S]{S} K. Stromberg, Probabilities on a compact group, Trans. Amer. Math. Soc. {\bf 94} (1960), pp. 295–309.

\bibitem[U]{U} K. Urbanik, On the limiting probability distribution on a compact topological group, Fund. Math. {\bf 44} (1957), pp. 253–261.


\bibitem[Vi]{Vi} C. Villani, Optimal transport. Old and new, Grundlehren Math. Wiss., {\bf 338} [Fundamental Principles of Mathematical Sciences], Springer-Verlag, Berlin, 2009. xxii+973 pp.

\bibitem[Vit]{Vit} G. Vitali, Sui gruppi di punti e sulle funzioni di variabili reali, Atti dell'Accademia delle Scienze di Torino (Italian), {\bf 43} (1908), pp. 75–92, (Title translation) On groups of points and functions of real variables.


\bibitem[Y1]{Y1} { W. Yang, Convergence in the Ces\`aro sense and strong law of large numbers for non\-ho\-mo\-ge\-neous Markov chains. Ninth special issue on linear algebra and statistics, Linear Algebra Appl. {\bf 354} (2002), 275–288.}

\bibitem[Y2]{Y2} W. Yang, Strong law of large numbers for countable nonhomogeneous Markov chains. Linear Algebra Appl. {\bf 430} (2009), no.11-12, 3008–3018.

\bibitem[ZYW]{ZYW} P. Zhang, W. Yang, B. Wang, The strong law of large numbers for multivariate functions of continuous-state nonhomogeneous Markov chains, Dynamic Systems and Applications {\bf 27} (2018), no. 2, pp. 257-269.














\end{thebibliography}
\end{document}